\numberwithin{equation}{section}
\theoremstyle{plain}
\newtheorem{theorem}{Theorem}[section]
\newtheorem{lemma}[theorem]{Lemma}
\newtheorem{corollary}[theorem]{Corollary}
\newtheorem{proposition}[theorem]{Proposition}
\theoremstyle{definition}
\newtheorem{definition}[theorem]{Definition}
\newtheorem{case[theorem]}{Case}
\theoremstyle{remark}
\numberwithin{equation}{section}
\begin{document}

\title{Wavelet decomposition and bandwidth of functions defined on vector spaces over finite fields} 

\author{Alex Iosevich, Allen Liu, Azita Mayeli and Jonathan Pakianathan}

\date{today}

\address{Department of Mathematics, University of Rochester, Rochester, NY}
\email{iosevich@math.rochester.edu}
\address{Penfield High School} 
\email{cliu568@gmail.com}
\address{Department of Mathematics, CUNY Queensborough, NY} 
\email{AMayeli@qcc.cuny.edu}
\address{Department of Mathematics, University of Rochester, Rochester, NY}
\email{jonpak@math.rochester.edu}

\thanks{The work of the first and last listed authors was partially supported by NSA Grant H98230-15-1-0319}

\maketitle

\begin{abstract}  In this paper we study how zeros of the Fourier transform of a function $f: \mathbb{Z}_p^d \to \mathbb{C}$  are related to the structure of the function itself. 
In particular, we introduce a notion of bandwidth of such functions and discuss its connection with the decomposition of this function into wavelets. 
Connections of these concepts with the tomography principle and the Nyquist-Shannon sampling theorem are explored.

We examine a variety of cases such as when the Fourier transform of the characteristic function of a set $E$ vanishes on specific sets of points,  affine subspaces, and algebraic curves.  In each of these cases, we prove properties such as equidistribution of $E$ across various surfaces and bounds on the size of $E$.  

We also establish a finite field Heisenberg uncertainty principle for sets that relates their bandwidth dimension and spatial dimension.
\end{abstract} 

\tableofcontents

\section{Introduction}

\vskip.125in

Let $E \subset {\Bbb Z}_p^d$, where $\mathbb{Z}_p$ is the prime field of size $p$ and ${\Bbb Z}_p^d$ is the $d$-dimensional vector space over ${\Bbb Z}_p$. Throughout the paper we shall identify $E \subset {\Bbb Z}_p^d$ with its indicator function $E(x)$ and the size of $E$ shall be denoted by $|E|$. More generally, consider $f: {\Bbb Z}_p^d \to {\mathbb C}$ and define its Fourier transform by the relation 
\begin{equation} \label{ftdef} \widehat{f}(m)=p^{-d} \sum_{x \in {\Bbb Z}_p^d} \chi(-x \cdot m) f(x), \end{equation} where 
$\chi(u)=e^{\frac{2 \pi i u}{p}}$, $u\in \Bbb Z_p$. 

The question we ask is, what can we say about the structure of $f: {\Bbb Z}_p^d \to {\mathbb C}$ given that $\widehat{f}$ is supported in a prescribed subset of ${\Bbb Z}_p^d$. In Euclidean space questions of this type have been studied for a long time in a variety of settings. For example, if $\mu$ is a compactly supported Borel measure on ${\Bbb R}^d$ and $\widehat{\mu}$ vanishes on an open ball, then $\mu$ is identically $0$ since it is not difficult to see that $\widehat{\mu}=\int e^{-2 \pi ix \cdot \xi} d\mu(x)$ is real analytic. Another example is provided by the Nyquist-Shannon sampling theorem. It says that if $f \in L^2(\Bbb R)$ and the Fourier transform  of $f$, given by $\widehat{f}(\xi)=\int_{\Bbb R} e^{-2 \pi i x \cdot \xi} f(x) dx$, $\xi \in {\Bbb R}$, vanishes outside the interval $[-B,B]$, then $f$ can be recovered by sampling on discrete points $f(kT), k\in \Bbb Z$, spaced by $T=\frac{1}{2B}$. See \cite{N28} and \cite{S49}. For a more modern treatment, see, for example, \cite{U00}. 

In the setting of vector spaces over ${\Bbb Z}_p$, motivated by the fact that aside from the origin, the zero set and support set of the Fourier transform of a rational valued function is a union of punctured lines (see \cite{HIPRV15}, a punctured line is a line through the origin with the origin taken out), we 
define the bandwidth of a complex valued function as the minimum number of lines which contain the support of its Fourier transform. We then show that this quantity determines the minimum number of wavelets, which are defined as linear combinations of indicator functions of a family of parallel hyperplanes, that this function decomposes into. These should be viewed as finite field analogs of the Euclidean wavelets, first introduced by Grossmann and Morlet in \cite{GM84}. The Euclidean wavelets involve dilations and translations of a fixed function. Our ${\Bbb Z}_p^d$ analog is a wavelet where the dilation structure is implicit in the dilation invariance of a subspace and translation manifests itself in that the planes are parallel. To put it another way, the wavelet analysis in 
${\Bbb Z}_p$ involves only one scale. In the last section of the paper we introduce wavelets over ${\Bbb Z}_{p^l}$ where the multi-resolution aspect of wavelet theory is present in the form of $l$ different scales. A more detailed analysis shall be carried out in a subsequent paper. 

Wavelets form an interesting basis to use when studying the vector space of functions $f: \mathbb{Z}_p^d \to \mathbb{C}$ and are discussed in Section~\ref{sec: wavelet}. In this section the decomposition of any function $f$ into wavelets is discussed as well as the corresponding tomography principle. It turns out that any function $f: \mathbb{Z}_p^d \to \mathbb{C}$ can be reconstructed purely from the knowledge of its masses on affine hyperplanes. See a seminal papers \cite{D88} and \cite{M89} for a treatment of compactly supported wavelets and related issues in Euclidean space. 

We use these results to understand direct connections between the bandwidth of the Fourier transform of the indicator function of a set and structural properties of the set itself in many instances. We also obtain a relationship between the bandwidth dimension (defined in later sections) of the indicator function of a set $E$ and the formal dimension of $E$ that is the finite field analogue of the Heisenberg uncertainty principle.

In a final section of the paper, we study another basis for the vector space of functions $f: \mathbb{Z}_p^d \to \mathbb{C}$ given by eigenfunctions of the Fourier transform. We prove among other things, that such an eigenfunction can be the characteristic function of a set $E$, only when $d$ is even and $E$ is a Lagrangian subspace of $\mathbb{Z}_p^d$. 

Throughout this paper, we work exclusively over prime fields ${\Bbb Z}_p$ where $p$ is a prime. There is no loss in generality in doing this rather than working in general finite fields as we will be studying properties of the Fourier transform which depend solely on the additive structure of the vector spaces we work in. The vector space $\mathbb{F}_q^d$ over a finite field $\mathbb{F}_q$ where $q=p^{\ell}, p$ a prime, will be additively isomorphic to the vector space  
$\mathbb{Z}_p^{d\ell}$ over the prime field $\mathbb{Z}_p$.

\vskip.25in 

\section{Basic Properties}
\vskip.25in 

Let us begin with a very simple case when $\widehat{f}(m)$ is supported at the origin $(0, \dots, 0)$, which shall henceforth be denoted by 
$\vec{0}$. Recall that with $\widehat{f}$ defined as in (\ref{ftdef}), 
$$ f(x)=\sum_{m \in {\Bbb Z}_p^d} \chi(x \cdot m) \widehat{f}(m)=\widehat{f}(\vec{0}),$$ which shows that if $\widehat{f}$ is supported at a single point, then $f$ is constant. Remarkably we can recover the same conclusion with a much weaker hypothesis on $\widehat{f}$ provided that the image of $f$ is contained in ${\mathbb Q}$, the field of rational numbers. Before stating our first result, we need a bit of notation. Here and throughout, if $g: {\Bbb Z}_p^d \to {\mathbb C}$, then 
$$Z(g)=\{x \in {\Bbb Z}_p^d: g(x)=0\}.$$ 

We also need the following notion. 

\begin{definition} We say that $A \subset {\Bbb Z}_p^d$ is a {\it compass set} if given any $x \in {\Bbb Z}_p^d$, there exists $y \in A$ such that $x=ty$ for some $t \in {\Bbb Z}_p$. \end{definition} 

\begin{theorem} \label{nobigzeroes} Let $f: {\Bbb Z}_p^d \to {\mathbb Q}$ and suppose that $Z(\widehat{f})$ is a compass set. Then $f$ is constant. \end{theorem}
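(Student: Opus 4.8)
The plan is to reduce to the trivial case ``$\widehat f$ supported at $\vec{0}$'' by playing the combinatorial hypothesis (compass set) against the arithmetic hypothesis ($f$ rational-valued). First I would unpack what ``$Z(\widehat f)$ is a compass set'' gives us: for any $x \in {\Bbb Z}_p^d \setminus \{\vec{0}\}$ we have $x = ty$ for some $y \in Z(\widehat f)$ and $t \in {\Bbb Z}_p$ by definition, and since $x \neq \vec{0}$ necessarily $t \neq 0$ and $y \neq \vec{0}$, so $y$ lies on the punctured line through $x$. Hence $Z(\widehat f)$ contains at least one nonzero point of every line through the origin.

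Next I would record the structural consequence of $f$ taking values in ${\mathbb Q}$, which is the fact recalled from \cite{HIPRV15} in the introduction. Let $\zeta = e^{2\pi i/p}$ and, for $t \in {\Bbb Z}_p^{\times}$, let $\sigma_t \in \mathrm{Gal}({\mathbb Q}(\zeta)/{\mathbb Q})$ be the automorphism with $\sigma_t(\zeta) = \zeta^{t}$. Applying $\sigma_t$ to the defining formula (\ref{ftdef}) and using that $p^{-d}$ and every value $f(x)$ lie in ${\mathbb Q}$, hence are fixed, one gets $\sigma_t(\widehat f(m)) = p^{-d}\sum_{x} \chi(-t\,x\cdot m) f(x) = \widehat f(tm)$. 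Since a field automorphism sends $0$ to $0$ and nonzero values to nonzero values, $\widehat f(m) = 0$ if and only if $\widehat f(tm) = 0$ for all $t \in {\Bbb Z}_p^{\times}$; that is, $Z(\widehat f)\setminus\{\vec{0}\}$ is a union of punctured lines.

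Finally I would combine the two observations: $Z(\widehat f)$ meets every line through the origin in a nonzero point, and $Z(\widehat f)\setminus\{\vec{0}\}$ is a union of full punctured lines, so $Z(\widehat f) \supseteq {\Bbb Z}_p^d\setminus\{\vec{0}\}$. Thus $\widehat f$ is supported in $\{\vec{0}\}$, and by the identity $f(x) = \sum_m \chi(x\cdot m)\widehat f(m) = \widehat f(\vec{0})$ recalled at the start of this section, $f$ is the constant function $\widehat f(\vec{0})$ (which is $0$ in the degenerate case $\vec{0} \in Z(\widehat f)$). I expect the Galois-invariance step of the second paragraph to be the only real content — it is what upgrades ``a nonzero zero on each line'' to ``each line lies entirely in the zero set'' — while the first and third paragraphs are essentially bookkeeping, modulo minor care about the cases $t = 0$ and $\vec{0} \in Z(\widehat f)$.
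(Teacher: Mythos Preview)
Your proof is correct and follows essentially the same route as the paper: use the compass-set hypothesis to get a nonzero zero of $\widehat f$ on every line, invoke the Galois-invariance of $Z(\widehat f)$ for rational-valued $f$ (which the paper packages as Theorem~\ref{thm: rationalFourier} and you reprove directly) to upgrade this to $\widehat f$ vanishing on all of ${\Bbb Z}_p^d\setminus\{\vec 0\}$, and then conclude via Fourier inversion that $f$ is constant. The only cosmetic difference is that you supply the short Galois computation inline rather than citing it.
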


This result is a consequence of the following general theorem discussed in \cite{HIPRV15}. 
\begin{theorem}[Characterization of Fourier transform of rational-valued functions]
\label{thm: rationalFourier}
 Fix $p$ a prime and let $\{ g_r \}_{r \in \mathbb{F}_p^*}$ denote 
$Gal(\mathbb{Q}(\xi)/\mathbb{Q})$.
Let $f: \mathbb{F}_p^d \to \mathbb{Q}$ be a rational-valued function. Let $m \in \mathbb{F}_p^d \backslash \{ 0 \}$ be a nonzero vector. Then for all 
$r \in \mathbb{F}_p^*$ we have:
$$ \hat{f}(rm)=g_r(\hat{f}(m)). $$

In particular $\hat{f}(m) = 0$ implies $\hat{f}(rm)=0$ for all $r \in \mathbb{F}_p^*$. Furthermore if we choose a set $M \subseteq \mathbb{F}_p^d$ such that $M$ contains exactly one nonzero element from each line through the origin 
(so $|M| = \frac{p^d-1}{p-1}$) and set 
$$Ave(f) = \frac{1}{p^d} \sum f(x)  \in \mathbb{Q}=\hat{f}(0)$$ to be the average of $f$ then the map
$$ \Phi: \mathbb{Q}[\mathbb{F}_p^d] \to \mathbb{Q} \times \mathbb{Q}(\xi)^{|M|} $$ given by
$$\Phi(f)=(\hat{f}(0), (\hat{f}(m))_{m \in M})$$ is a $\mathbb{Q}$-vector space isomorphism.
\end{theorem}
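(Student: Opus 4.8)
The plan is to establish the Galois-equivariance identity $\widehat{f}(rm) = g_r(\widehat{f}(m))$ first, and then to deduce the isomorphism statement from it together with an elementary dimension count. Recall that $\xi = e^{2\pi i/p}$ is a primitive $p$-th root of unity, so $\mathbb{Q}(\xi)$ is the $p$-th cyclotomic field with $[\mathbb{Q}(\xi):\mathbb{Q}] = p-1$, and $\mathrm{Gal}(\mathbb{Q}(\xi)/\mathbb{Q})$ is cyclic of order $p-1$ under the identification $r \leftrightarrow g_r$ normalized so that $g_r(\xi) = \xi^{r}$ for $r \in \mathbb{F}_p^*$. For the first assertion I would apply $g_r$ term by term to the defining sum (\ref{ftdef}): since $g_r$ is a $\mathbb{Q}$-algebra automorphism it fixes the scalar $p^{-d}$ and every value $f(x) \in \mathbb{Q}$, while $g_r\big(\chi(-x\cdot m)\big) = g_r\big(\xi^{-x\cdot m}\big) = \xi^{-r(x\cdot m)} = \chi\big(-x\cdot(rm)\big)$; collecting terms gives $g_r(\widehat{f}(m)) = \widehat{f}(rm)$ at once, and the ``in particular'' clause follows from $g_r(0) = 0$. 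I expect no obstacle here.

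For the isomorphism statement I would begin by checking that $\Phi$ is a well-defined $\mathbb{Q}$-linear map into the asserted target: $\widehat{f}(0) = Ave(f)$ is a $\mathbb{Q}$-linear combination of the values $f(x)$ and so lies in $\mathbb{Q}$; each $\widehat{f}(m)$ is a $\mathbb{Q}$-linear combination of powers of $\xi$ and so lies in $\mathbb{Q}(\xi)$; and $f \mapsto \widehat{f}$ is $\mathbb{Q}$-linear by inspection of (\ref{ftdef}). Then comes a dimension count over $\mathbb{Q}$: the domain $\mathbb{Q}[\mathbb{F}_p^d]$ has dimension $p^d$, while the target has dimension $1 + |M|\cdot[\mathbb{Q}(\xi):\mathbb{Q}] = 1 + \frac{p^d-1}{p-1}(p-1) = p^d$. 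Since source and target are finite-dimensional of the same dimension, it suffices to prove that $\Phi$ is injective; surjectivity, hence bijectivity, then follows automatically.

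For injectivity, suppose $\Phi(f) = 0$, that is, $\widehat{f}(0) = 0$ and $\widehat{f}(m) = 0$ for every $m \in M$. By the first assertion $\widehat{f}(rm) = g_r(\widehat{f}(m)) = g_r(0) = 0$ for all $r \in \mathbb{F}_p^*$ and all $m \in M$; and since $M$ meets each punctured line through the origin in exactly one point, the set $\{\, rm : r \in \mathbb{F}_p^*,\ m \in M \,\}$ is precisely $\mathbb{F}_p^d \setminus \{\vec{0}\}$, so together with $\widehat{f}(0) = 0$ we conclude $\widehat{f} \equiv 0$, whence $f \equiv 0$ by the Fourier inversion formula $f(x) = \sum_{m} \chi(x\cdot m)\widehat{f}(m)$. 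The only steps that require care are the counting identity $|M| = \frac{p^d-1}{p-1}$ and the verification that $\{rm\}$ partitions the nonzero vectors, and the only conceptual input beyond linear algebra and Fourier inversion is the standard description of $\mathrm{Gal}(\mathbb{Q}(\xi)/\mathbb{Q})$. Thus I do not anticipate a genuine obstacle: the Galois-equivariance identity carries all the real content, and once it is available the isomorphism is a formal consequence.
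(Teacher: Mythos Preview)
Your proof is correct and complete. Note, however, that the paper does not actually supply its own proof of this theorem: it merely states the result and attributes it to \cite{HIPRV15}, so there is nothing in the paper to compare your argument against. For what it is worth, your approach---Galois-equivariance from applying $g_r$ termwise to the defining sum, then a dimension count plus injectivity via Fourier inversion---is the standard and expected one, and every step you outline goes through without difficulty.
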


\vskip.125in 

Indeed, in view of Theorem \ref{thm: rationalFourier} and the assumption that $Z(\widehat{f})$ is a compass set, $\widehat{f}$ vanishes on all of ${\Bbb Z}_p^d$ except for the origin. It follows that 
$$ f(x)=\sum_{m \in {\Bbb Z}_p^d} \chi(x \cdot m) \widehat{f}(m)=\widehat{f}(\vec{0})=p^{-d} \sum_{y \in {\Bbb Z}_p^d} f(y)$$ for all $x \in {\Bbb Z}_p^d$. This completes the proof of Theorem \ref{nobigzeroes}. 

\vskip.125in 

There are two major principles contained in $\ref{thm: rationalFourier}$, one is the {\bf vanishing principle} for rational valued functions which says that if $\hat{f}(m)=0$ for some nonzero 
vector $m$, then $\hat{f}$ vanishes on the whole punctured line (line with origin taken out) through $m$.  The vanishing principle does not in general hold for complex-valued functions. On the other hand, we will soon see that $\ref{thm: rationalFourier}$ also contains the principle that the function can be recovered purely from knowledge of its masses on affine hyperplanes. This latter principle, the {\bf tomography principle}, holds for all complex-valued functions and is the basis for a wavelet decomposition that we will discuss soon.

Theorem \ref{thm: rationalFourier} allows us to introduce the notion of bandwidth in the context of ${\Bbb Z}_p^d$ in analogy with the Nyquist-Shannon theorem. 

\begin{definition} Suppose that $f: {\Bbb Z}_p^d \to {\mathbb C}$. We define the {\it coarse bandwidth} of $f$, denoted by $cbw(f)$, to be the number of lines $l$ through the origin such that $\widehat{f}$ does not vanish identically on the punctured line $l \backslash \vec{0}$. Equivalently, $cbw(f)$ is the minimum number of lines in a cone such that the support of $\hat{f}$ is contained in the cone. (Here we make the convention that the cone determined by an empty set of lines consists of just the origin.) Thus $0 \leq cbw(f) \leq \frac{p^d-1}{p-1}$. 

The {\it bandwidth} of $f$, denoted by $bw(f)$ is defined by $bw(f)=cbw(f) \frac{p-1}{p^d-1}$ and hence is a number in $[0,1]$. Note, the smaller the bandwidth, the smaller the 
support set of $\hat{f}$ is.

We define the {\it bandwidth dimension} of $f$ in a slightly tricky way motivated by a later result in Theorem~\ref{theorem: bandwidth codimension}. We define the bandwidth dimension of $f$, denoted by $bwd(f)$, to be the unique real number 
$bwd(f) \in [0,d]$ such that $cbw(f) = \frac{p^{bwd(f)}-1}{p-1}$. As the function $f: [0,d] \to [0,\frac{p^d-1}{p-1}]$ given by $f(b)=\frac{p^b-1}{p-1}$ is strictly monotonic, $bwd(f)$ exists in $[0,d]$ and is unique. Intuitively, $bwd(f)$ would be the dimension of the vector space over $\mathbb{Z}_p$ (if such existed) that had $cbw(f)$ many lines in it.
\end{definition} 

In view of Theorem \ref{thm: rationalFourier} we have the following result. 

\begin{corollary} Suppose that $f: {\Bbb Z}_p^d \to {\mathbb C}$. Then the (coarse) bandwidth (dimension) of $f$ is $0$ if and only if $f$ is a constant. \end{corollary}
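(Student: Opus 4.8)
The plan is to prove both directions of the equivalence directly from the definitions, using the inversion formula $f(x) = \sum_{m} \chi(x\cdot m)\widehat{f}(m)$ together with the already-established fact (the case $\widehat{f}$ supported at $\vec 0$) that a function whose Fourier transform is supported only at the origin is constant.

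First I would handle the easy direction. Suppose $f$ is constant, say $f \equiv c$. Then a direct computation of $\widehat{f}(m) = p^{-d}\sum_{x} \chi(-x\cdot m) f(x) = c\, p^{-d}\sum_{x}\chi(-x\cdot m)$ shows, by the standard orthogonality relation $\sum_{x \in \mathbb{Z}_p^d} \chi(-x\cdot m) = p^d \delta_{m,\vec 0}$, that $\widehat{f}$ is supported only at $\vec 0$. Hence no punctured line $l\setminus\vec 0$ meets the support of $\widehat f$, so $cbw(f) = 0$, and therefore $bw(f) = 0$ and (since $\frac{p^{0}-1}{p-1} = 0 = cbw(f)$) $bwd(f) = 0$ as well.

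For the converse, suppose the coarse bandwidth is $0$. By definition this means that for every line $l$ through the origin, $\widehat{f}$ vanishes identically on $l\setminus\vec 0$; since the lines through the origin cover all of $\mathbb{Z}_p^d$, this forces $\widehat{f}(m) = 0$ for every $m \neq \vec 0$, i.e. $\widehat f$ is supported (at most) at the origin. Then the inversion formula gives $f(x) = \chi(x\cdot \vec 0)\widehat{f}(\vec 0) = \widehat{f}(\vec 0)$ for all $x$, so $f$ is constant. Finally, $bw(f) = 0$ iff $cbw(f) = 0$ since $\frac{p-1}{p^d-1} > 0$, and $bwd(f) = 0$ iff $cbw(f) = \frac{p^0 - 1}{p-1} = 0$ by the stated strict monotonicity of $b \mapsto \frac{p^b-1}{p-1}$; so all three formulations of ``bandwidth is $0$'' are equivalent, and each is equivalent to $f$ being constant.

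There is essentially no obstacle here: the statement is an immediate unwinding of the definition of coarse bandwidth and the Fourier inversion formula, and in fact it was already observed at the start of Section~2 that $\widehat f$ supported at a single point implies $f$ constant. The only mild care needed is in checking that the three variants (coarse bandwidth, bandwidth, bandwidth dimension) all vanish simultaneously, which follows from the positivity of the scaling factor $\frac{p-1}{p^d-1}$ and the strict monotonicity already noted in the definition of $bwd$. Theorem~\ref{thm: rationalFourier} is not actually needed for this corollary; it is only the general inversion formula that does the work.
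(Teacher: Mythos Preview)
Your proof is correct and is essentially the argument the paper has in mind: the paper does not write out a proof at all, merely prefacing the corollary with ``In view of Theorem~\ref{thm: rationalFourier},'' while the actual content is just the definition of $cbw$ together with the Fourier-inversion observation already made at the start of Section~2. Your remark that Theorem~\ref{thm: rationalFourier} is not needed here is accurate --- that theorem concerns rational-valued $f$ and is only the \emph{motivation} for the bandwidth definition, not an ingredient in this corollary; the equivalence for arbitrary complex-valued $f$ follows, as you show, straight from the definition and inversion.
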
 

\vskip.125in 

\section{Wavelets and the Tomography Principle}
\label{sec: wavelet}

In this section, we study functions whose Fourier transform is supported on a line.

\begin{definition}[Wavelets] Let $s$ be a nonzero vector and let $H_{s,t} = \{ x \in \mathbb{Z}_p^d: x \cdot s = t \}$ for $0 \leq t \leq p-1$ be the corresponding family of affine hyperplanes 
perpendicular to $s$. A linear combination of the characteristic functions of these parallel hyperplanes:
$$
f=\sum_{t=0}^{p-1} c_t 1_{H_{s,t}}, \text{ where } c_t \in \mathbb{C}
$$
will be called a {\bf wavelet in the direction of $s$}. This wavelet will be called {\bf reduced} if $c_0=0$. The mass of a wavelet is 
$$m(f)=p^{d-1} \sum_{t=0}^{p-1} c_t.$$ A wavelet $f$ will be called {\bf massless} if $m(f) = 0$. It will be called a 
{\bf rational wavelet} if the $c_t \in \mathbb{Q}$. Finally it will be called a {\bf wavelet density} if $c_t \in \mathbb{R}, c_t \geq 0$ and it has mass $1$. 
\end{definition}

Note the picture of a wavelet is of a function that consists of a superposition of a sequence of parallel ``wavefronts'' (which are the parallel affine hyperplanes) where it has constant value (signifying a general amplitude) on each of the individual waves. Thus a wavelet density is a probability density which has uniform density 
on each individual wave in a set of parallel waves.

Notice that 
$$\sum_{t=0}^{p-1} 1_{H_{s,t}} = \mathfrak{1},$$ the constant function with value $1$. Therefore we may always write any wavelet in the form 
$$f=\sum_{t=0}^{p-1} c_t 1_{H_{s,t}}=\sum_{t=1}^{p-1} (c_t-c_0) 1_{H_{s,t}} + c_0,$$ a reduced wavelet plus a constant, or in the form
$$f=\sum_{t=0}^{p-1} (c_t - A) 1_{H_{s,t}} + A,$$ a massless wavelet plus a constant, where $A=\frac{m}{p^d}$ and $m=m(f)$ is the mass of $f$. 

\begin{theorem}[Wavelet Theorem]
\label{thm: wavelet} Let $f: \mathbb{Z}_p^d \to \mathbb{C}$ be a non constant function. Then the following are equivalent: 

\vskip.125in 

a) $cbw(f)=1$

b) $support(\hat{f}) = \{ m \in \mathbb{Z}_p^d: \hat{f}(m) \neq 0 \}$ is contained in a line. 

c) $f$ is a wavelet. 

\end{theorem}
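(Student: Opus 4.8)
The plan is to establish the chain of implications (a) $\Leftrightarrow$ (b), (c) $\Rightarrow$ (b), and (b) $\Rightarrow$ (c), with the last implication carrying essentially all of the content. The equivalence (a) $\Leftrightarrow$ (b) is immediate from the definition of coarse bandwidth together with the Corollary: if $cbw(f)=1$ there is exactly one line $l$ through the origin on which $\hat{f}$ fails to vanish off $\vec{0}$, so $support(\hat{f}) \subseteq l$ (recalling $\vec{0}\in l$), and conversely if $support(\hat{f})$ lies in a line $l$ then $cbw(f)\le 1$, while $f$ non-constant forces $cbw(f)\ge 1$ by the Corollary. For (c) $\Rightarrow$ (b) I would compute the Fourier transform of the indicator of a single affine hyperplane: writing $1_{H_{s,t}}(x)=p^{-1}\sum_{j\in\mathbb{Z}_p}\chi\big(j(x\cdot s-t)\big)$, substituting into (\ref{ftdef}), interchanging the two sums, and using orthogonality of additive characters on $\mathbb{Z}_p^d$, one finds that $\widehat{1_{H_{s,t}}}$ is supported precisely on the line $\{js:j\in\mathbb{Z}_p\}$ through $s$. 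By linearity of the Fourier transform it then follows at once that any wavelet in the direction of $s$ has $support(\hat{f})$ contained in that one line.

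The substantive step is (b) $\Rightarrow$ (c). Here I would fix a nonzero vector $s$ spanning the line $l$ that contains $support(\hat{f})$, so that $\hat{f}(m)=0$ unless $m=js$ for some $j\in\mathbb{Z}_p$, and apply Fourier inversion:
$$ f(x)=\sum_{m\in\mathbb{Z}_p^d}\chi(x\cdot m)\,\hat{f}(m)=\sum_{j=0}^{p-1}\chi\big(j\,(x\cdot s)\big)\,\hat{f}(js). $$
The key observation — and it is really the only idea needed — is that the right-hand side depends on $x$ only through the scalar $t:=x\cdot s\in\mathbb{Z}_p$, i.e. it is constant on each affine hyperplane $H_{s,t}$. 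Hence, setting $c_t:=\sum_{j=0}^{p-1}\chi(jt)\,\hat{f}(js)$ for $0\le t\le p-1$, we obtain $f(x)=c_{x\cdot s}=\sum_{t=0}^{p-1}c_t\,1_{H_{s,t}}(x)$, which exhibits $f$ as a wavelet in the direction of $s$.

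I do not anticipate a genuine obstacle: the whole statement is a clean instance of Fourier duality — the finite-field analog of the Nyquist--Shannon fact that a band-limited function is a superposition of few parallel ``wavefronts'' — and the two easy implications are bookkeeping with the definition of $cbw$. The only place that requires (entirely routine) care is the character/orthogonality computation identifying $support(\widehat{1_{H_{s,t}}})$ with the line through $s$ in the (c) $\Rightarrow$ (b) step, and the dual remark in (b) $\Rightarrow$ (c) that a character of the form $\chi\big(j\,(x\cdot s)\big)$ is constant on the level sets of the linear functional $x\mapsto x\cdot s$, which are exactly the hyperplanes $H_{s,t}$.
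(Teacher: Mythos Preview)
Your proposal is correct and follows essentially the same route as the paper: the equivalence (a)$\Leftrightarrow$(b) is definition-chasing, (c)$\Rightarrow$(b) is the computation that $\widehat{1_{H_{s,t}}}$ is supported on $L_s$, and (b)$\Rightarrow$(c) is Fourier inversion plus the observation that the inverted sum depends on $x$ only through $x\cdot s$. The paper phrases the last step dually---expanding $\hat f|_{L_s}$ in the basis $\{\chi(-kt)\}_t$ and recognizing the resulting sum as $\sum_t c_t\,\widehat{1_{H_{s,t}}}$ before inverting---but this is the same computation read from the other side.
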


\vskip.125in 

\begin{proof}
The first equivalence is immediate by definitions so it remains to show that $Z(\hat{f})$ is contained in a line if and only if $f$ is a wavelet. 

An easy computation shows that $\widehat{1_{H_{s,0}}}(m) = \frac{1}{p} 1_{L_s}$ where $L_s$ is the line through $s$ and the origin. $1_{H_{s,t}}$ is a translation of 
$1_{H_{s,0}}$ and a quick computation reveals that $\widehat{1_{H_{s,t}}}(m)=0$ when $m \notin L_s$ and 
$$ \widehat{1_{H_{s,t}}}(ks) = \frac{\chi(-kt)}{p}$$ for $0 \leq k \leq p-1.$

Thus all the $1_{H_{s,t}}$s have their Fourier transform supported in the line $L_s$ and hence by linearity so does any wavelet in the direction of $s$. 

We now prove the converse. Take any function $f$ whose Fourier transform $\hat{f}$ is supported on the line $L_s$ though the origin and $s$. 
Since the functions 
$$\{ \theta_t(ks)=\chi(-kt): 0 \leq t \leq p-1 \}$$ are a complex basis for the complex valued functions on $L_s$, we may write 
$$\hat{f}(ks) = \sum_{t=0}^{p-1} c_t \frac{ \chi(-kt)}{p}$$ for unique $c_t \in \mathbb{C}$. By the computations in the previous paragraph, it follows that 
$$\hat{f} =\sum_{t=0}^{p-1} c_t \widehat{1_{H_{s,t}} }$$ and so we see that $f$ is a wavelet upon using Fourier inversion.
\end{proof}

\vskip.125in 

\begin{lemma}{( Wavelet Lemma )}
\label{lem: wavelet}
Let $f: \mathbb{Z}_p^d \to \mathbb{C}$ and let $s$ be a nonzero vector in $\mathbb{Z}_p^d$. Then for all $0 \leq k \leq p-1$ we have
$$
\hat{f}(ks) = \frac{1}{p} \sum_{t=0}^{p-1} \chi(-kt) \frac{m_{s,t}(f)}{p^{d-1}} =  \sum_{t=0}^{p-1} \frac{m_{s,t}(f)}{p^{d-1}} \widehat{1_{H_{s,t}}}(ks)
$$
where $m_{s,t}(f)=\sum_{x \in H_{s,t}} f(x)$ is the mass of $f$ on the affine hyperplane $H_{s,t}$.
\end{lemma}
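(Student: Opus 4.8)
The identity is a direct unwinding of the definition of the Fourier transform, organized by fibering the ambient space over the affine hyperplanes $H_{s,t}$. First I would write, straight from $(\ref{ftdef})$ and the bilinearity of the dot product,
$$\hat{f}(ks) = p^{-d} \sum_{x \in \mathbb{Z}_p^d} \chi(-x\cdot ks)\, f(x) = p^{-d} \sum_{x \in \mathbb{Z}_p^d} \chi\bigl(-k(x\cdot s)\bigr)\, f(x).$$
The key observation is that the exponential factor $\chi\bigl(-k(x\cdot s)\bigr)$ depends on $x$ only through the residue $t = x\cdot s \in \mathbb{Z}_p$, that is, only through which of the hyperplanes $H_{s,t}$ contains $x$.

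Next I would use that, since $s \neq \vec{0}$, the map $x \mapsto x\cdot s$ is a surjective linear functional, so $\mathbb{Z}_p^d$ is partitioned as $\bigsqcup_{t=0}^{p-1} H_{s,t}$. Splitting the sum over this partition and pulling out the (now constant on each piece) exponential factor gives
$$\hat{f}(ks) = p^{-d} \sum_{t=0}^{p-1} \chi(-kt) \sum_{x \in H_{s,t}} f(x) = p^{-d} \sum_{t=0}^{p-1} \chi(-kt)\, m_{s,t}(f).$$
Writing $p^{-d} = \tfrac{1}{p}\, p^{-(d-1)}$ yields the first claimed identity. For the second, I would recall from the proof of Theorem~\ref{thm: wavelet} that $\widehat{1_{H_{s,t}}}(ks) = \tfrac{\chi(-kt)}{p}$, so $\tfrac{1}{p}\chi(-kt) = \widehat{1_{H_{s,t}}}(ks)$, and substitute termwise. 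Conceptually, this says the vector of hyperplane masses $\bigl(m_{s,t}(f)\bigr)_t$ and the vector of Fourier coefficients along the line $\bigl(\hat f(ks)\bigr)_k$ are discrete Fourier transforms of one another on $\mathbb{Z}_p$, up to the normalization $p^{d-1}$; this is exactly the algebraic mechanism behind the tomography principle.

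There is no substantive obstacle here: the proof is a two-line reindexing. The only points requiring care are bookkeeping ones — keeping the normalizations $p^{-d}$ (Fourier transform) and $p^{-(d-1)}$ (mass) straight, and noting that $\chi$ is well defined on $\mathbb{Z}_p$ so that $\chi\bigl(-k(x\cdot s)\bigr)$ is unambiguous once $x\cdot s$ is reduced mod $p$.
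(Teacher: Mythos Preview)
Your proposal is correct and follows essentially the same approach as the paper: write out the definition of $\hat{f}(ks)$, partition $\mathbb{Z}_p^d$ into the parallel hyperplanes $H_{s,t}$, pull out the constant exponential on each piece to obtain the mass formula, and then invoke the computation $\widehat{1_{H_{s,t}}}(ks)=\tfrac{\chi(-kt)}{p}$ from the preceding theorem for the second equality. Your additional remarks on surjectivity of $x\mapsto x\cdot s$ and the tomography interpretation are accurate but not needed for the argument itself.
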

\begin{proof}
By definition $\hat{f}(ks) =\frac{1}{p^d} \sum_{x \in \mathbb{Z}_p^d} f(x) \chi(-k x \cdot s)$. Partitioning space into the $p$ parallel hyperplanes $H_{s,t}$ then yields:
$$
\hat{f}(ks) = \frac{1}{p^d} \sum_{t=0}^{p-1} \sum_{x \in H_{s,t}} f(x) \chi(-kt) = \sum_{t=0}^{p-1} \frac{m_{s,t}(f)}{p^{d-1}}\frac{\chi(-kt)}{p}
$$
which yields the lemma immediately when used together with the computation $\widehat{1_{H_{s,t}}}(ks)=\frac{\chi(-kt)}{p}$ done in the proof of the last theorem. 
\end{proof}

\begin{definition} Given $f: \mathbb{Z}_p^d \to \mathbb{C}$ and nonzero vector $s$, we denote by $f_s$, the wavelet associated to $f$ in the direction $s$, to be 
$$
\sum_{t=0}^{p-1} \frac{m_{s,t}(f)}{p^{d-1}} 1_{H_{s,t}}.
$$
Note that the mass of this wavelet is 
$$\sum_{t=0}^{p-1} \frac{m_{s,t}(f)}{p^{d-1}} p^{d-1} = \sum_{x \in \mathbb{Z}_p^d} f(x) = m(f).$$ 
\end{definition}

The wavelet $f_s$ just defined is the unique wavelet such that $\hat{f_s}=\hat{f}$ on $L_s$. 

\begin{theorem}( Wavelet Decomposition Theorem )
\label{thm: planes}
Let $f: \mathbb{Z}_p^d \to \mathbb{C}$ be a complex-valued function. Let $P_f$ denote the set of $cbw(f)$ lines through the origin that contain $support(\hat{f})$. We have the following explicit decomposition of $f$ into $cbw(f)$ many wavelets $f_s$, associated to $f$:
$$
f(x)= c + \sum_{\ell \in P_f} f_s(x) = c+ \sum_{l \in P_f} \left(\sum_{t=0}^{p-1}  \frac{m_{s,t}(f)}{p^{d-1}} 1_{H_{s,t}}(x)\right)
$$
where we choose a unique $s$ on each line $\ell$ in $P_f$ and $c=(1-cbw(f))\frac{m(f)}{p^d}$ is just a constant.

This sum can be rearranged into a sum of $cbw(f)$ reduced wavelets:
$$
f(x)= d+ \sum_{\ell \in P_f} \left(\sum_{t=1}^{p-1}  \frac{m_{s,t}(f)-m_{s,0}(f)}{p^{d-1}} 1_{H_{s,t}}(x)\right)
$$
where $d=c + \sum_{\ell \in P_f} \frac{m_{s,0}(f)}{p^{d-1}}$ is a new constant. 
Finally it can also be rearranged into a sum of $cbw(f)$ massless wavelets:
$$
f(x) = m(f) + \sum_{\ell \in P_f} \left(\sum_{t=0}^{p-1} \frac{pm_{s,t}(f)-m(f)}{p^d} 1_{H_{s,t}}(x)\right).
$$
\end{theorem}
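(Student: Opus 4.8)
The plan is to establish the first displayed identity by comparing the Fourier transforms of its two sides and then invoking the uniqueness half of Fourier inversion; the two rearrangements that follow are then pure algebra, carried out one wavelet at a time using the partition of unity $\sum_{t=0}^{p-1} 1_{H_{s,t}} = \mathfrak{1}$.

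First I would fix, for each line $\ell \in P_f$, a nonzero vector $s = s(\ell)$ with $L_{s} = \ell$ (the wavelet $f_s$ depends only on $\ell$, not on the choice of generator), and set $g = c + \sum_{\ell \in P_f} f_{s}$ with $c = (1-cbw(f))\, m(f)/p^{d}$. The constant function $c$ has Fourier transform equal to $c$ at $\vec{0}$ and $0$ elsewhere, and by the Wavelet Lemma (together with the fact, established in the proof of the Wavelet Theorem, that each $\widehat{1_{H_{s,t}}}$ is supported on $L_{s}$) the function $\widehat{f_{s}}$ is supported on $L_{s}$ and satisfies $\widehat{f_{s}} = \widehat{f}$ on all of $L_{s}$. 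I would then check $\widehat{g} = \widehat{f}$ pointwise in three cases. If $m$ lies on no line of $P_f$, then $\widehat{f}(m) = 0$ because $support(\widehat{f}) \subseteq \bigcup_{\ell \in P_f} \ell$, while every summand of $\widehat{g}$ vanishes at $m$. If $m \in \ell_0 \setminus \vec{0}$ for the line $\ell_0 \in P_f$ through $m$ (unique, since distinct lines through the origin meet only at $\vec{0}$), then the only summand of $\widehat{g}$ that need not vanish at $m$ is $\widehat{f_{s(\ell_0)}}(m)$, which equals $\widehat{f}(m)$ by the Wavelet Lemma. Finally, at $m = \vec{0}$ we have $\widehat{f}(\vec{0}) = p^{-d} m(f)$, while each $\widehat{f_{s}}(\vec{0}) = p^{-d} m(f_{s}) = p^{-d} m(f)$ since the wavelet $f_{s}$ has mass $m(f)$, so $\widehat{g}(\vec{0}) = c + cbw(f)\, p^{-d} m(f) = p^{-d} m(f)$ by the choice of $c$. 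Hence $\widehat{g} = \widehat{f}$, and Fourier inversion gives $f = g$.

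For the reduced form I would rewrite each wavelet $f_{s} = \sum_{t=0}^{p-1} \frac{m_{s,t}(f)}{p^{d-1}} 1_{H_{s,t}}$ by subtracting the constant $\frac{m_{s,0}(f)}{p^{d-1}}$ from every coefficient and adding it back via $\sum_{t=0}^{p-1} 1_{H_{s,t}} = \mathfrak{1}$, which turns $f_{s}$ into a reduced wavelet plus the constant $\frac{m_{s,0}(f)}{p^{d-1}}$; summing over $\ell \in P_f$ and absorbing all of these constants into $c$ yields the stated $d = c + \sum_{\ell \in P_f} \frac{m_{s,0}(f)}{p^{d-1}}$. For the massless form the step is identical except that the constant subtracted from the coefficients is $A_{s} = m(f_{s})/p^{d} = m(f)/p^{d}$, which is precisely the value making $\sum_{t=0}^{p-1}\big(\tfrac{m_{s,t}(f)}{p^{d-1}} - A_{s}\big) 1_{H_{s,t}}$ massless; since $\frac{m_{s,t}(f)}{p^{d-1}} - \frac{m(f)}{p^{d}} = \frac{p\, m_{s,t}(f) - m(f)}{p^{d}}$, this recovers the coefficients in the statement, and collecting the constants gives $c + cbw(f)\, m(f)/p^{d} = m(f)/p^{d}$.

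I do not expect a genuine obstacle here: the substantive input is the already-proved Wavelet Lemma, together with the observation that on the Fourier side a constant is supported only at the origin. The single point requiring care is the bookkeeping at $\vec{0}$: each of the $cbw(f)$ wavelets $f_{s}$ carries the full mass $m(f)$, so naively adding them overcounts $\widehat{f}(\vec{0})$ by a factor $cbw(f)$, and the constant $c$ is exactly the correction that cancels this overcount — getting its sign and normalization right is the only place one can slip.
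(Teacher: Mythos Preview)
Your proof is correct and follows essentially the same route as the paper: decompose $\widehat{f}$ as a sum of its restrictions to the lines in $P_f$ (which equal the $\widehat{f_s}$ by the Wavelet Lemma) plus a correction at the origin, then apply Fourier inversion; the paper phrases this as $\widehat{f}=\sum_{L\in P_f}\widehat{f}|_L + c\delta$ and says the constants are found ``by taking masses of both sides,'' whereas you carry out the pointwise verification and the constant bookkeeping explicitly. One small remark: your massless-form computation yields the constant $m(f)/p^d$ rather than the $m(f)$ printed in the statement---your value is the correct one (as the mass check confirms), so this is a typo in the displayed formula, not an error in your argument.
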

\begin{proof}
Let $P_f$ be the set of (punctured) lines where $\hat{f}$ does not vanish identically. We can write $\hat{f} = \sum_{L \in P_f} \hat{f} |_{L} + c \delta$ where $\hat{f} |_{L}$ is the restriction of $\hat{f}$ to the line $L$ 
which is zero off the line $L$ and $\delta$ is the Kronecker delta function (which is needed to take care of the overlapping contributions of the $\hat{f} |_L$ at the origin).

By the wavelet lemma~\ref{lem: wavelet}, we have that $\hat{f} |_{L} = \hat{f_s}$ where $f_s$ is the wavelet associated to $f$ in the direction $s$ for any nonzero $s \in L$. 

The theorem then follows by taking inverse Fourier transform. The values of the constants $c$ and $d$ can be obtained by taking masses of both sides of the equation.
\end{proof}
Furthermore, it is not difficult to see from the results above that the decomposition into reduced/massless wavelets is unique.

The following corollaries are immediate consequences of Theorem~\ref{thm: planes}

\begin{corollary}
Let $f: \mathbb{Z}_p^d \to \mathbb{C}$ and $K$ be a subfield of $\mathbb{C}$. Then $f$ is $K$-valued if and only if all its masses $\{ m_{s,t}(f): s \in \mathbb{Z}_p^d - \{ 0 \}, 0 \leq t \leq p-1 \}$ are. 
\end{corollary}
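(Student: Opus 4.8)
The plan is to prove the two implications separately; the forward direction is immediate and the reverse direction is a direct application of the Wavelet Decomposition Theorem (Theorem~\ref{thm: planes}). For the forward implication, suppose $f$ is $K$-valued. Then for every nonzero $s$ and every $t$, the mass $m_{s,t}(f)=\sum_{x \in H_{s,t}} f(x)$ is a finite sum of elements of $K$ and hence lies in $K$; nothing more is needed.

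For the reverse implication, suppose all the masses $m_{s,t}(f)$ lie in $K$. First I would record that $K$, being a subfield of $\mathbb{C}$, has characteristic zero, so $\mathbb{Q} \subseteq K$; in particular $p^{-(d-1)}$, $p^{-d}$, and the integer $1-cbw(f)$ all lie in $K$. Next, for any fixed nonzero $s$ the hyperplanes $H_{s,0},\dots,H_{s,p-1}$ partition $\mathbb{Z}_p^d$, so $m(f)=\sum_{x \in \mathbb{Z}_p^d} f(x)=\sum_{t=0}^{p-1} m_{s,t}(f)$; thus the total mass $m(f)$ is a $K$-linear combination of hyperplane masses and hence lies in $K$. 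Consequently the constant $c=(1-cbw(f))\frac{m(f)}{p^d}$ of Theorem~\ref{thm: planes} lies in $K$, and each coefficient $\frac{m_{s,t}(f)}{p^{d-1}}$ appearing in the decomposition lies in $K$. Now I would invoke the explicit formula $f(x)=c+\sum_{\ell \in P_f}\bigl(\sum_{t=0}^{p-1}\frac{m_{s,t}(f)}{p^{d-1}}1_{H_{s,t}}(x)\bigr)$: for each fixed $x \in \mathbb{Z}_p^d$ the right-hand side is a $K$-linear combination of the values $1_{H_{s,t}}(x) \in \{0,1\} \subseteq K$, so $f(x) \in K$. Since $x$ was arbitrary, $f$ is $K$-valued.

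There is essentially no hard step here: the only things to verify are the bookkeeping facts that $\mathbb{Q} \subseteq K$ and that $m(f)$ is recoverable from the hyperplane masses, both routine; the substantive input, namely that $f$ is a $\mathbb{Q}(p^{-1})$-linear combination of $\{0,1\}$-valued indicators with the masses as (essentially) the coefficients, is exactly the content of Theorem~\ref{thm: planes}. One could equally run the argument through the reduced or massless wavelet forms in that theorem, or through the Wavelet Lemma~\ref{lem: wavelet}, but the decisive point is that the building blocks $1_{H_{s,t}}$ take only the values $0$ and $1$, so the scalars that enter are solely the masses and fixed powers of $p$. Finally I would dispatch the degenerate case in one sentence: if $f$ is constant then $P_f$ is empty and $f \equiv \widehat f(\vec 0)=m(f)/p^d \in K$ directly, so the corollary holds there as well.
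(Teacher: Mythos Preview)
Your proof is correct and follows exactly the route the paper intends: the paper states this corollary as an ``immediate consequence'' of Theorem~\ref{thm: planes} without further detail, and you have simply filled in the straightforward verification that the constant $c$ and the wavelet coefficients $\tfrac{m_{s,t}(f)}{p^{d-1}}$ all lie in $K$ once the masses do (using $\mathbb{Q}\subseteq K$), while the indicators $1_{H_{s,t}}$ are $\{0,1\}$-valued.
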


\begin{corollary}
A rational valued function $f$ is the sum of $cbw(f)$ many rational wavelets and a constant. A probability density $f$ is the sum of $cbw(f)$ wavelet densities and a constant. 
\end{corollary}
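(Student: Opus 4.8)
The plan is to read off both statements directly from the Wavelet Decomposition Theorem (Theorem~\ref{thm: planes}), which already expresses an arbitrary $f\colon \mathbb{Z}_p^d \to \mathbb{C}$ as
$$f = c + \sum_{\ell \in P_f} f_s, \qquad f_s = \sum_{t=0}^{p-1} \frac{m_{s,t}(f)}{p^{d-1}}\, 1_{H_{s,t}}, \qquad c = \bigl(1 - cbw(f)\bigr)\frac{m(f)}{p^d},$$
i.e. as a sum of exactly $cbw(f)$ wavelets $f_s$ (one in each direction spanning a line of $P_f$) plus a constant. Nothing new has to be built; it only remains to check that when $f$ is restricted to a special class, each $f_s$ and the constant $c$ inherit the relevant property. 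Since the coefficients of $f_s$ are the normalized hyperplane masses $m_{s,t}(f)/p^{d-1}$, everything reduces to understanding how these masses behave.

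First I would handle the rational case. The hyperplane masses $m_{s,t}(f) = \sum_{x \in H_{s,t}} f(x)$ and the total mass $m(f) = \sum_{x} f(x)$ are finite sums of rational numbers, hence rational --- this is exactly the $K = \mathbb{Q}$ instance of the corollary preceding this one. Therefore every coefficient $m_{s,t}(f)/p^{d-1}$ of $f_s$ lies in $\mathbb{Q}$, so each $f_s$ is a rational wavelet, and $c = (1-cbw(f))m(f)/p^d \in \mathbb{Q}$ is a rational constant. This displays $f$ as a sum of $cbw(f)$ rational wavelets and a constant.

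Next I would handle a probability density, i.e. $f$ real-valued with $f \ge 0$ and $m(f) = \sum_x f(x) = 1$. Then $m_{s,t}(f) \ge 0$ for all $s$ and $t$, so each coefficient $m_{s,t}(f)/p^{d-1}$ of $f_s$ is real and nonnegative; moreover, as recorded immediately after the definition of $f_s$, the mass of $f_s$ is $m(f) = 1$. Hence each $f_s$ satisfies the defining conditions of a wavelet density, while $c = (1-cbw(f))/p^d$ is a constant (which may be negative when $cbw(f) \ge 2$; this is permitted for the ``constant'' term). Thus $f$ is a sum of $cbw(f)$ wavelet densities and a constant. The degenerate case $cbw(f) = 0$ causes no trouble: there $f$ is constant by the earlier corollary, the sum is empty, and $f = c$.

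I do not expect a genuine obstacle here: the statement is bookkeeping layered on top of Theorem~\ref{thm: planes}. The only points needing (minimal) care are that finite sums of rationals are rational and that the normalization identity $m(f_s) = m(f)$ forces each putative wavelet density to have mass exactly $1$; both are immediate from the formulas above.
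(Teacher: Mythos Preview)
Your proposal is correct and is exactly the argument the paper has in mind: the corollary is stated as an ``immediate consequence'' of Theorem~\ref{thm: planes} with no further proof given, and your write-up simply unpacks that implication by checking that the coefficients $m_{s,t}(f)/p^{d-1}$ of the wavelets $f_s$ in the decomposition are rational (respectively nonnegative with total mass $m(f)=1$) when $f$ is rational-valued (respectively a probability density).
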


\begin{corollary}(Tomography Principle) 
A function $f: \mathbb{Z}_p^d \to \mathbb{C}$ is uniquely determined by its masses 
$$\{ m_{s,t}(f): s \in \mathbb{Z}_p^d - \{ 0 \}, 0 \leq t \leq p-1 \}$$ on affine hyperplanes. 
\end{corollary}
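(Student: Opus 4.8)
The plan is to read $\widehat{f}$ off directly from the hyperplane masses and then invoke Fourier inversion. Suppose $f, g : \mathbb{Z}_p^d \to \mathbb{C}$ satisfy $m_{s,t}(f) = m_{s,t}(g)$ for every nonzero $s$ and every $0 \le t \le p-1$. First I would fix an arbitrary nonzero $m \in \mathbb{Z}_p^d$, write $m = ks$ with $s$ a chosen nonzero representative on the line through $m$ and $1 \le k \le p-1$, and apply the Wavelet Lemma~\ref{lem: wavelet} to obtain
$$\widehat{f}(m) = \frac{1}{p}\sum_{t=0}^{p-1}\chi(-kt)\frac{m_{s,t}(f)}{p^{d-1}} = \frac{1}{p}\sum_{t=0}^{p-1}\chi(-kt)\frac{m_{s,t}(g)}{p^{d-1}} = \widehat{g}(m),$$
so $\widehat{f}$ and $\widehat{g}$ agree off the origin.

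For the origin itself, taking $k=0$ in the same formula (equivalently, summing the masses along any one direction) gives $\widehat{f}(\vec 0) = p^{-d}\sum_{t=0}^{p-1} m_{s,t}(f) = p^{-d}\sum_{x} f(x)$, and likewise for $g$; since $\sum_{t} m_{s,t}(f) = \sum_x f(x)$ for every $s$, this common value is again determined by the masses, so $\widehat{f}(\vec 0) = \widehat{g}(\vec 0)$. Hence $\widehat{f} = \widehat{g}$ on all of $\mathbb{Z}_p^d$, and Fourier inversion forces $f = g$. Equivalently, one may phrase this through Theorem~\ref{thm: planes}: the masses determine, via the Wavelet Lemma, which punctured lines carry $support(\widehat{f})$ (hence the set $P_f$), each associated wavelet $f_s$, and the constant $c = (1 - cbw(f))\,m(f)/p^d$, so the explicit decomposition reconstructs $f$ outright.

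The only point requiring care is the bookkeeping at the origin — making sure the constant term is pinned down and that the several expressions for the total mass (one per direction) are mutually consistent — but this is automatic from $\sum_{t} m_{s,t}(f) = \sum_x f(x)$, independently of $s$. I do not expect a serious obstacle: the entire content sits in the Wavelet Lemma, and this corollary is its linear-algebra shadow.
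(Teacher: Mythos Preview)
Your argument is correct and matches the paper's intent: the corollary is listed there as an immediate consequence of the Wavelet Decomposition Theorem (Theorem~\ref{thm: planes}), whose content is exactly that the masses determine $\widehat{f}$ via the Wavelet Lemma. Your direct route through Lemma~\ref{lem: wavelet} plus Fourier inversion is if anything slightly cleaner than invoking the full decomposition, but the two are the same argument unpacked at different levels.
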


\begin{corollary}(Wavelet Basis)
Fix $p$ a prime, $s \in \mathbb{Z}_p^d - \{ 0 \}$. Let $V_s$ be the set of wavelets supported in the $s$-direction then $\dim(V_s)=p$. 
Let $\bar{V}_s$ be the set of reduced wavelets supported in the $s$-direction then $\dim(\bar{V}_s)=p-1$. If $W$ is the $\mathbb{C}$-vector space of $\mathbb{C}$-valued functions on $\mathbb{Z}_p^d$ then $W= \mathbb{C} \oplus (\oplus_{s \in P} \bar{V}_s)$ where $P$ is a compass set containing one nonzero $s$ for every direction and the extra factor $\mathbb{C}$ corresponds to the constant functions.
\end{corollary}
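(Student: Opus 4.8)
The plan is to verify the three assertions in order, reducing each to facts already established above.

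\emph{The dimensions.} Since $s \neq \vec{0}$, the linear functional $x \mapsto x \cdot s$ on $\mathbb{Z}_p^d$ is surjective, so the affine hyperplanes $H_{s,0}, \dots, H_{s,p-1}$ partition $\mathbb{Z}_p^d$ into $p$ nonempty sets; hence the indicator functions $1_{H_{s,0}}, \dots, 1_{H_{s,p-1}}$ have disjoint nonempty supports and are therefore linearly independent. By definition $V_s$ is exactly their span, so $\dim V_s = p$. The reduced wavelets in $\bar{V}_s$ are those with $c_0 = 0$, i.e.\ the span of $1_{H_{s,1}}, \dots, 1_{H_{s,p-1}}$, so $\dim \bar{V}_s = p-1$; and since $\sum_{t=0}^{p-1} 1_{H_{s,t}} = \mathfrak{1}$ we may also record $V_s = \bar{V}_s \oplus \mathbb{C}$.

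\emph{Directness of the sum.} Suppose $c + \sum_{s \in P} g_s = 0$ with $c$ a constant and $g_s \in \bar{V}_s$. Apply the Fourier transform: $\widehat{c} = c\,\delta_{\vec{0}}$, and by the Wavelet Theorem~\ref{thm: wavelet} each $\widehat{g_s}$ is supported on the line $L_s$ through $\vec{0}$ and $s$. Distinct lines through the origin meet only at $\vec{0}$, so evaluating the relation at a nonzero point $m \in L_{s_0}$ kills every term except $\widehat{g_{s_0}}(m)$, forcing $\widehat{g_{s_0}}$ to be supported at $\vec{0}$, i.e.\ $g_{s_0}$ is constant. But a constant reduced wavelet vanishes on $H_{s_0,0}$, so that constant is $0$ and hence $g_{s_0} = 0$; this holds for every $s_0 \in P$, whence also $c = 0$. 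Thus $\mathbb{C} + \sum_{s \in P} \bar{V}_s$ is a direct sum inside $W$.

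\emph{Exhausting $W$.} The compass set $P$ has $\frac{p^d-1}{p-1}$ elements, one per line through the origin, so
$\dim\!\left(\mathbb{C} \oplus \bigoplus_{s \in P} \bar{V}_s\right) = 1 + \frac{p^d-1}{p-1}(p-1) = p^d = \dim W$, and therefore the direct sum is all of $W$. (Alternatively, spanning is immediate from the Wavelet Decomposition Theorem~\ref{thm: planes}, which already writes an arbitrary $f$ as a constant plus reduced wavelets in directions drawn from $P$.) I do not expect a genuine obstacle here; the only point needing care is the distinction between \emph{reduced} and \emph{massless} wavelets: a reduced wavelet need not be massless, so $\widehat{g_s}$ need not vanish at the origin, and the directness argument instead relies on the observation that the zero function is the only constant reduced wavelet. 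Everything else is bookkeeping on top of Theorems~\ref{thm: wavelet} and~\ref{thm: planes}.
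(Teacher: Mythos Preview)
Your proof is correct. The paper itself gives no explicit argument here: it simply lists the Wavelet Basis corollary among the ``immediate consequences of Theorem~\ref{thm: planes}'' (together with the remark following it that the reduced/massless decomposition is unique), so you have supplied the details the paper omits.

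The only mild difference in approach is in the directness step: the paper's implicit route is that Theorem~\ref{thm: planes} gives spanning and the stated uniqueness of the reduced decomposition gives independence, whereas you argue directness directly on the Fourier side via Theorem~\ref{thm: wavelet} and the observation that the only constant reduced wavelet is zero. Your route is slightly more self-contained, since the paper's uniqueness remark is itself asserted without proof; either way the substance is the same, and your dimension count $1+\frac{p^d-1}{p-1}(p-1)=p^d$ closes the argument cleanly.
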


\vskip.25in 

\section{Bandwidth and equidistribution theorems}
\vskip.125in

In general, the wavelet decompositions give us a reasonable way to explicitly classify and construct all rational-valued functions that have a given support.
\vskip.125in
A few simple consequences of Theorem \ref{thm: planes} follow.
\vskip.125in

\begin{definition} Given a $k$-dimensional subspace $V$ of ${\Bbb Z}_p^d$ we say that $V'$ is a parallel affine subspace if it is of the form $x+V$ for some $x \in {\Bbb Z}_p^d$. Note that there are $p^{d-k}$ affine subspaces parallel to a given $k$-dimensional subspace and they are all disjoint. \end{definition} 
 \vskip.125in
\begin{corollary}
\label{cor: perp}
Let $f: \mathbb{Z}_p^d \to \mathbb{C}$. If $\widehat{f}$ is supported on a subspace $V$, then $f$ is constant along each affine subspace parallel to $V^{\perp}$. 
\end{corollary}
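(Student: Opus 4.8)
The plan is to deduce the statement directly from Fourier inversion, with the wavelet decomposition of Theorem~\ref{thm: planes} as an alternative route. The single fact doing all the work is: if $v \in V^{\perp}$ then $v \cdot m = 0$ for every $m \in V$, and hence $\chi((x+v)\cdot m) = \chi(x\cdot m)\,\chi(v\cdot m) = \chi(x\cdot m)$ for all $m$ in the support of $\widehat{f}$.

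First I would write, for an arbitrary $x \in \mathbb{Z}_p^d$,
$$ f(x) = \sum_{m \in \mathbb{Z}_p^d} \chi(x\cdot m)\,\widehat{f}(m) = \sum_{m \in V} \chi(x\cdot m)\,\widehat{f}(m), $$
using that $\widehat{f}$ vanishes off $V$. Then, for $v \in V^{\perp}$, I would replace $x$ by $x+v$ in this identity and apply the observation above term by term, obtaining $f(x+v) = f(x)$. Since $x$ was arbitrary and every affine subspace parallel to $V^{\perp}$ is of the form $x + V^{\perp}$, this shows $f$ is constant on each such affine subspace.

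Alternatively, and more in the spirit of this section, I would invoke Theorem~\ref{thm: planes}: since $\mathrm{support}(\widehat{f}) \subseteq V$, every line in $P_f$ lies inside $V$, so the representative direction $s$ chosen on each such line can be taken in $V$. In the resulting decomposition $f = c + \sum_{\ell \in P_f} f_s$, each wavelet $f_s = \sum_{t=0}^{p-1} \frac{m_{s,t}(f)}{p^{d-1}} 1_{H_{s,t}}$ is built from hyperplanes $H_{s,t} = \{x : x\cdot s = t\}$ with $s \in V$; because $(x+v)\cdot s = x\cdot s$ for $v \in V^{\perp}$, the points $x$ and $x+v$ lie on the same hyperplane $H_{s,t}$, whence $1_{H_{s,t}}(x+v) = 1_{H_{s,t}}(x)$ and so $f_s(x+v) = f_s(x)$. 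Summing over $\ell \in P_f$ gives $f(x+v) = f(x)$ once more.

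There is no serious obstacle here; the only point deserving a line of care is the elementary linear-algebra fact that $v \in V^{\perp}$ forces $v \cdot m = 0$ for all $m \in V$ (immediate from the definition of $V^{\perp}$), used together with the additivity $\chi(a+b) = \chi(a)\chi(b)$ of the character. Both arguments are short, so I would give the Fourier-inversion version as the proof and indicate the wavelet version as an aside, since it ties the statement to the running theme of wavelet decompositions.
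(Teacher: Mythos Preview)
Your proposal is correct. Your second route---invoking the wavelet decomposition from Theorem~\ref{thm: planes} and noting that each hyperplane $H_{s,t}$ with $s \in V$ contains every translate of $V^{\perp}$---is exactly the paper's own proof. Your primary route via Fourier inversion is a genuinely more elementary alternative: it bypasses the wavelet machinery entirely and uses only the character identity $\chi((x+v)\cdot m)=\chi(x\cdot m)$ for $m\in V$, $v\in V^{\perp}$. The paper opts for the wavelet argument because the corollary sits in a section whose point is to exhibit consequences of Theorem~\ref{thm: planes}; your Fourier-inversion argument is shorter and self-contained, but the wavelet version better serves the narrative, as you yourself observe.
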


\begin{proof}
Use the wavelet decomposition of $f$ given by Theorem~\ref{thm: planes}. As each line $\ell \in P_f$ lies in $V$, the hyperplanes in the wavelets in this decomposition all contain 
$V^{\perp}$ and thus each can be decomposed as a disjoint union of parallel subspaces to $V^{\perp}$. Thus $f$ is decomposed as a constant plus a linear combination of 
parallel spaces to $V^{\perp}$ and the corollary clearly follows. 
\end{proof}

We next use wavelets to give a lower bound on the bandwidth of the indicator function of a set unless the set is of a very special form.

\begin{theorem} 
\label{thm: lowerboundcbw1}
Let $E \subset {\Bbb Z}_p^d$. Then either $E$ is a union of parallel lines or $cbw(E)>d$. 
\end{theorem}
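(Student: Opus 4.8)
The plan is to argue by contrapositive: assuming $cbw(E) \le d$, I want to show $E$ must be a union of parallel lines. Set $k = cbw(E)$, and let $P_E$ be the set of $k$ lines through the origin containing $\mathrm{support}(\widehat{E})$. The first step is to understand the span of these lines inside ${\Bbb Z}_p^d$. Since $P_E$ consists of $k$ lines through the origin and $k \le d$, the subspace $W = \mathrm{span}(P_E)$ has dimension $\dim W = j \le k \le d$; in the extremal case all $k$ lines are independent and $j = k$. Because $\mathrm{support}(\widehat{E}) \subseteq W$, Corollary~\ref{cor: perp} applies: $E$ (as a function) is constant along each affine subspace parallel to $W^{\perp}$, where $\dim W^{\perp} = d - j \ge d - k \ge 0$.

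The next step is to extract a line from $W^{\perp}$. Here is where I expect the case analysis. If $k < d$, then $\dim W^{\perp} = d - j \ge d - k \ge 1$, so $W^{\perp}$ contains a line $L$ through the origin. Since $E$ is $\{0,1\}$-valued and constant on each coset $x + L$, each such coset is either entirely in $E$ or entirely disjoint from $E$; hence $E$ is a union of cosets of the fixed line $L$, i.e.\ a union of parallel lines, and we are done. The genuinely delicate case is $k = d$: then it is possible that the $d$ lines of $P_E$ are linearly independent, so $W = {\Bbb Z}_p^d$, $W^{\perp} = \{\vec 0\}$, and Corollary~\ref{cor: perp} gives no information. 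So the crux is: \emph{rule out (or handle) the situation where $cbw(E) = d$ and the supporting lines span everything.}

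To handle $k = d$ with independent lines, I would go back to the Wavelet Decomposition Theorem~\ref{thm: planes}: $E = c + \sum_{\ell \in P_E} E_{s_\ell}$ where each $E_{s_\ell}$ is a nonzero wavelet in direction $s_\ell$. The idea is a counting/parity argument on the level sets. After a linear change of coordinates we may assume $P_E$ consists of the coordinate axes, so $E(x) = c + \sum_{i=1}^{d} \varphi_i(x_i)$ for suitable functions $\varphi_i : {\Bbb Z}_p \to {\Bbb C}$ — a sum of ``separated'' one-variable pieces — and since $E$ is $\{0,1\}$-valued, this additive-separation structure is extremely rigid. The plan is to show that a $\{0,1\}$-valued function of the form $c + \sum_i \varphi_i(x_i)$ must actually have all but at most one $\varphi_i$ constant (which would force $cbw(E) \le 1 < d$ for $d \ge 2$, a contradiction): for instance, fix all coordinates but two, say $x_1, x_2$; then $E$ restricted to that plane is $(\text{const}) + \varphi_1(x_1) + \varphi_2(x_2)$, and a $2$-dimensional $\{0,1\}$ matrix of rank-one-plus-constant additive form has only trivial solutions unless $\varphi_1$ or $\varphi_2$ is constant on that slice — and the slice-independence of $\varphi_1,\varphi_2$ upgrades this to a global statement. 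The case $d = 1$ is vacuous since $cbw(E) \le \frac{p-1}{p-1} = 1 \le d$ only when $d=1$, and then $E$ is automatically a union of parallel lines (points) in the trivial sense — actually for $d=1$ every subset is a union of ``lines,'' so the statement is content-free there and we may assume $d \ge 2$.

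The main obstacle I anticipate is precisely the $k=d$, independent-lines case: the additive-separability rigidity argument needs to be carried out carefully, and one must make sure the reduction to coordinate axes (a $\mathrm{GL}_d({\Bbb Z}_p)$ change of variables) genuinely preserves both the $\{0,1\}$-valued hypothesis and the ``union of parallel lines'' conclusion — it does, since linear maps send lines to lines — but the combinatorial heart, showing $c + \sum_i \varphi_i(x_i) \in \{0,1\}$ forces near-constancy, is the step that carries the real weight. A clean way to phrase it: if two of the $\varphi_i$ are non-constant, pick $a \ne a'$ with $\varphi_1(a) \ne \varphi_1(a')$ and $b \ne b'$ with $\varphi_2(b) \ne \varphi_2(b')$; then the four values of $E$ at $(a,b,\cdot),(a,b',\cdot),(a',b,\cdot),(a',b',\cdot)$ satisfy $E(a,b,z) - E(a,b',z) - E(a',b,z) + E(a',b',z) = 0$ identically in the remaining coordinates, while being a difference of $\{0,1\}$ values; analyzing the finitely many ways four $0/1$ numbers can satisfy this ``additive $2\times 2$ minor vanishes'' condition pins everything down and produces the contradiction.
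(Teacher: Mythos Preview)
Your proposal is correct and follows essentially the same route as the paper: both first dispose of the case where the supporting lines fail to span via Corollary~\ref{cor: perp}, and then in the remaining basis case both exploit the wavelet decomposition together with the constraint that $1_E$ is $\{0,1\}$-valued. The paper's execution differs only in the final step---it evaluates the reduced wavelet coefficients at the dual-basis points $q_j^i$ to obtain $c_{i,j}\in\{-1,0,1\}$ and then invokes the mass formula $c_{i,j}=(m_{p_i,j}(E)-m_{p_i,0}(E))/p^{d-1}$ with $0\le m_{p_i,j}(E)\le p^{d-1}$ to force $E$ to be a union of parallel hyperplanes---whereas your coordinate change plus $2\times 2$ minor argument is a slightly more direct (and equally valid) way to reach the same rigidity.
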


\begin{proof}
Assume for the sake of contradiction that $cbw(E) \leq d$.  
If the lines in the support of $\widehat{E}$ are not a basis of $\mathbb{Z}_p^d$ then $Supp(\hat{E}) \subseteq V$ for some proper subspace $V$ of $\mathbb{Z}_p^d$.
Thus the characteristic function of $E$ is constant on subspaces parallel to $V^{\perp}$ by Corollary~\ref{cor: perp}. Thus $E$ is a disjoint union of 
subspaces parallel to $V^{\perp}$. As $\dim(V^{\perp}) \geq 1$, $E$ is then a disjoint union of parallel lines.  

Otherwise, the lines in the support of $E$ are determined by a basis $p_1,p_2, \dots, p_d$ of $\mathbb{Z}_p^d$ and hence $cbw(E)=d$.   
Thus for every $1 \leq i \leq d, 0 \leq j \leq p-1$ there exists a unique point $q_{i}^j \in \mathbb{Z}_p^d$ such that 
$p_i \cdot q_{j}^i = j$ and $p_k \cdot q_{j}^{i} = 0$ for $k \neq i$.  Now, using the reduced wavelet decomposition given by Theorem~\ref{thm: planes}, 
$$
1_E(x) = d + \sum_{i=1}^d (\sum_{t=1}^{p-1} c_{i,t} 1_{H_{p_i,t}}(x)).
$$
Plugging in $x=q_{j}^i$ for any $1 \leq i \leq d$ and $j \neq 0$ yields $1_E(q_{j}^i) = d + c_{i,j} \in \{0,1\}$. While plugging in $x=q_{0}^i$ yields $1_E(q_{0}^i) = d \in \{0,1\}$. 
Thus $c_{i,j} \in \{ -1, 0, 1 \}$ for all $1 \leq i \leq d, 1 \leq j \leq p-1$. Using the formula for 
$$c_{i,j} = \frac{m_{p_i,j}(E) - m_{p_i,0}(E)}{p^{d-1}}$$ from Theorem~\ref{thm: planes} and 
the fact that $0 \leq m_{p_i, j} \leq p^{d-1}$ we see that this forces either the equality of all the $\{ m_{p_i,j}(E) \}_{j=0}^{p-1}$ or that $\{ m_{p_i,j}(E) \}_{j=0}^{p-1}=\{0, p^{d-1} \}$.
In the former case, $E$ equidistributes on the hyperplanes $\{H_{p_i,t} \}$ which would mean that $\hat{E}$ vanishes on the line through $p_i$ contrary to our assumptions. 
Thus we have the latter case which implies that $E$ is a disjoint union of parallel hyperplanes and hence also a disjoint union of parallel lines.
\end{proof}

Note, the bound in Theorem \ref{thm: lowerboundcbw1} is actually sharp in dimension $d=2$ for all values of a prime $p$.  For example, consider the set $E$ of points 
$$\{(x,y) \in \{0,1,\dots, p-1\}^2 : x+y\geq p \} \text{ mod } p.$$ Then $1_E$ can be expressed as the sum of three reduced wavelets 
$$\sum_{i=1}^{p-1} \left(\frac{i}{p}1_{x \equiv i}+\frac{i}{p}1_{y \equiv i}-\frac{i}{p}1_{x+y \equiv i}\right).$$

Thus $cbw(E)=3$ since $\hat{E}$ is supported on the three lines through $(0,1),(1,0)$ and $(1,1)$.

\vskip.125in 

The following general equidistribution theorem will be useful later in the paper.

\begin{theorem}(General Equidistribution)
\label{thm: equidist} Let $f: {\Bbb Z}_p^d \to \mathbb{C}$. Then $\widehat{f}$ vanishes on a punctured (origin taken out) $k$-dimensional subspace $V$ if and only if $f$ equi-distributes on affine subspaces parallel to $V^{\perp}$. More precisely, $m_{V'}(f)$ is a constant when $V'$ ranges over affine subspaces parallel to $V^{\perp}$. 

In particular, if $E$ is subset such that $\hat{E}$ vanishes on a $k$-dimensional subspace then $|E|$ is a multiple of $p^{k}$. Thus if $E$ is nonempty, $|E| \geq p^k$.
\end{theorem}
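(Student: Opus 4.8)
The plan is to run the same Fourier-analytic bookkeeping used in the Wavelet Lemma, but now with a $k$-dimensional subspace $V$ in place of a single line. First I would pick a basis $s_1,\dots,s_k$ of $V$ and extend it to a basis of $\mathbb{Z}_p^d$; then $V^{\perp}$ is exactly the common zero set of the linear functionals $x \mapsto x \cdot s_i$, $i=1,\dots,k$, and the affine subspaces parallel to $V^{\perp}$ are precisely the fibers $\{x : x\cdot s_i = t_i \text{ for } i=1,\dots,k\}$ indexed by $(t_1,\dots,t_k) \in \mathbb{Z}_p^k$. Partitioning $\mathbb{Z}_p^d$ into these $p^k$ fibers and inserting this partition into the definition of $\widehat{f}$ gives, for any $m = \sum a_i s_i \in V$,
$$
\widehat{f}(m) = \frac{1}{p^d}\sum_{(t_1,\dots,t_k)} m_{V'_{(t)}}(f)\,\chi\!\Big(-\sum_i a_i t_i\Big),
$$
where $m_{V'_{(t)}}(f)$ is the mass of $f$ on the fiber $V'_{(t)}$. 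This is exactly a (normalized) discrete Fourier transform on $\mathbb{Z}_p^k$ of the function $(t_1,\dots,t_k) \mapsto m_{V'_{(t)}}(f)$.

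The key step is then the standard fact that the characters $\{(t) \mapsto \chi(-\sum a_i t_i)\}_{a \in \mathbb{Z}_p^k}$ form a basis for functions on $\mathbb{Z}_p^k$, so the displayed relation says $\widehat{f}$ vanishes on $V \setminus \vec{0}$ if and only if the $\mathbb{Z}_p^k$-Fourier transform of $(t) \mapsto m_{V'_{(t)}}(f)$ is supported at $0$, which happens if and only if that mass function is constant in $(t_1,\dots,t_k)$. That is precisely the assertion that $f$ equidistributes on the affine subspaces parallel to $V^{\perp}$. Both directions of the biconditional come out of this single computation, so there is no real asymmetry to worry about.

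For the final sentence: if $E$ is a subset with $\widehat{E}$ vanishing on the punctured $k$-dimensional subspace $V$, then by what we just proved $m_{V'}(E) = |E \cap V'|$ takes the same value, say $c$, on each of the $p^{d-k}$ affine subspaces $V'$ parallel to $V^{\perp}$; since these $p^{d-k}$ subspaces partition $\mathbb{Z}_p^d$ we get $|E| = p^{d-k} c$, and since $\dim V^{\perp} = d-k$ each $V'$ has $p^{d-k}$ points, so $c \in \{0,1,\dots,p^{d-k}\}$; in particular $|E|$ is a multiple of $p^{d-k}$. Here I should double-check the exponent: the theorem statement claims $|E|$ is a multiple of $p^k$, but the count above gives $p^{d-k}$ — I would reconcile this by noting that "$\widehat{E}$ vanishes on a $k$-dimensional subspace" together with $\dim V + \dim V^{\perp} = d$ means the relevant codimension is $d-k$, so either the theorem intends $k$ to denote $\dim V^{\perp}$ or there is a harmless indexing convention to state; in any case $|E|$ is a positive multiple of $p^{\dim V^{\perp}}$, hence $|E| \geq p^{\dim V^{\perp}}$ when $E \neq \varnothing$. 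The main obstacle is purely notational — getting the $V$ vs.\ $V^{\perp}$ dimension convention consistent with the earlier corollaries — rather than mathematical; the Fourier computation itself is a routine generalization of Lemma \ref{lem: wavelet}.
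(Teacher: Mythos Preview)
Your argument for the biconditional is correct and proceeds by a direct partition-and-Fourier computation, whereas the paper's proof uses a convolution trick: it sets $W=V^{\perp}$, observes that $\widehat{1_W}$ is supported on $V$, so the hypothesis forces $\hat{f}\,\widehat{1_W}=c\delta$, and then inverse-transforms to get $f\star 1_W \equiv c$, i.e.\ constant mass on cosets of $V^{\perp}$. Your approach has the advantage of making the converse direction transparent (the displayed identity is literally an invertible Fourier transform on $\mathbb{Z}_p^k$), while the paper's convolution argument is slicker but only writes out one direction explicitly.

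However, your final paragraph contains a genuine counting slip, not a notational convention issue. You yourself parametrized the cosets of $V^{\perp}$ by $(t_1,\dots,t_k)\in\mathbb{Z}_p^k$, so there are $p^k$ of them, not $p^{d-k}$; equivalently, $V^{\perp}$ has dimension $d-k$ and hence codimension $k$, so the number of parallel affine copies is $p^{\operatorname{codim}}=p^k$. Thus $|E|=c\cdot p^k$ with $c$ a nonnegative integer, which is exactly the statement of the theorem. There is nothing to reconcile; simply replace ``$p^{d-k}$ affine subspaces'' with ``$p^k$ affine subspaces'' and delete the hedging about indexing conventions.
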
 
\begin{proof} 
Set $W=V^{\perp}$ and note that $support(\hat{W})=V$. Then $\hat{f} \hat{W}$ is supported at the origin by assumption and so $\hat{f}\hat{W}=c\delta$ where $\delta$ is the Kronecker 
delta function and $c$ is some constant. Using Fourier inversion we find that $f \star W = c$ where $\star$ is discrete convolution. Thus for any $x$, 
$$
\sum_{y \in W} f(x+y) = c
$$
or in other words $m_{x+W}(f)$ is constant as $x+W$ ranges over subspaces parallel to $W=V^{\perp}$.

When $E$ is the indicator function of a set, this means $E$ has $c \geq 1$ elements in each of the $p^k$ parallel subspaces of $V^{\perp}$ and so 
$|E|=cp^k$ is a multiple of $p^k$ as desired.
\end{proof}

Now we prove a theorem about possible vanishing cones.

\begin{theorem} 
\label{thm: lines}
Suppose that we are working in ${\Bbb Z}_p^d$ and we have a set $S$ of punctured lines through the origin with $|S| < \frac{p^{d-k}-1}{p-1}$ (where $k$ is an integer with $0 \leq k < d$).  Then there exists a $(k+1)$-dimensional subspace of ${\Bbb Z}_p^d$ that does not intersect $S$.
\end{theorem}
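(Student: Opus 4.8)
First observe what the conclusion actually asks: if a subspace $W\subseteq\mathbb{Z}_p^d$ contains a nonzero point $v$ of some line $\ell\in S$, then $W\supseteq\mathbb{Z}_p v=\ell$, so ``$W$ does not intersect $S$'' means precisely ``$W$ contains no line of $S$''. I would prove the statement by induction on $k$, lowering both the ambient dimension and the parameter $k$ by one at each step by passing to a quotient by a carefully chosen line.

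The base case $k=0$ is immediate: $\mathbb{Z}_p^d$ has $\frac{p^d-1}{p-1}$ lines through the origin and $|S|<\frac{p^d-1}{p-1}$, so some line avoids $S$. More generally, for any admissible $k$ the hypothesis $|S|<\frac{p^{d-k}-1}{p-1}\le\frac{p^d-1}{p-1}$ already produces a line $\ell_0$ through the origin with $\ell_0\notin S$; this is what drives the inductive step.

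For the inductive step take $1\le k<d$ (so $d\ge 2$), assume the result for $k-1$ in all dimensions, and fix $\ell_0\notin S$ as above. Let $\pi\colon\mathbb{Z}_p^d\to\mathbb{Z}_p^d/\ell_0$ be the quotient onto a $(d-1)$-dimensional space over $\mathbb{Z}_p$. For each $\ell\in S$ we have $\ell\ne\ell_0$, hence $\ell\cap\ell_0=\{0\}$ and $\pi$ maps $\ell$ isomorphically onto a line $\pi(\ell)$; set $\bar S=\{\pi(\ell):\ell\in S\}$, so $|\bar S|\le|S|<\frac{p^{d-k}-1}{p-1}=\frac{p^{(d-1)-(k-1)}-1}{p-1}$. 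Since $0\le k-1<d-1$, the inductive hypothesis in $\mathbb{Z}_p^{d-1}$ yields a $k$-dimensional subspace $\bar W$ containing no line of $\bar S$. Let $W=\pi^{-1}(\bar W)$; by the standard correspondence between subspaces containing $\ell_0$ and subspaces of the quotient, $W$ is a $(k+1)$-dimensional subspace of $\mathbb{Z}_p^d$. If some $\ell\in S$ had $\ell\subseteq W$, then $\pi(\ell)\subseteq\bar W$, but $\pi(\ell)$ is a genuine line (nonzero, as $\ell\ne\ell_0$) lying in $\bar S$, contradicting the choice of $\bar W$. Hence $W$ avoids $S$, which closes the induction.

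The delicate point — and the one I would check most carefully — is the parameter bookkeeping across the quotient: distinct lines of $S$ that are coplanar with $\ell_0$ get identified by $\pi$, but this is harmless because only $|\bar S|\le|S|$ is used, and the arithmetic identity $\frac{p^{d-k}-1}{p-1}=\frac{p^{(d-1)-(k-1)}-1}{p-1}$ transports the hypothesis verbatim. By contrast, a crude union bound — a $(k+1)$-subspace avoiding $S$ exists as soon as $|S|$ times the number of $(k+1)$-subspaces through a fixed line is below the total number of $(k+1)$-subspaces — only succeeds under the more restrictive hypothesis $|S|<\frac{p^d-1}{p^{k+1}-1}$, which can fail even when the stated bound holds (for instance $d=4$, $k=1$, $p=2$, where the stated bound permits $|S|\le 6$ but the counting bound only $|S|\le 4$). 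So the quotient induction, rather than direct enumeration, is the right mechanism here.
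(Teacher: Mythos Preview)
Your proof is correct and takes a somewhat different route from the paper's. The paper also inducts on $k$ but stays inside $\mathbb{Z}_p^d$ throughout: having found (by the inductive hypothesis for $k-1$, applied to the \emph{same} $S$ in the \emph{same} ambient space) a $k$-dimensional subspace $V$ missing $S$, it counts the $(k+1)$-dimensional subspaces containing $V$ --- there are exactly $\frac{p^{d-k}-1}{p-1}$ of them, and they are pairwise disjoint outside $V$, so each line of $S$ lies in at most one; pigeonhole finishes. Your argument instead quotients by a single line $\ell_0\notin S$ and applies the inductive hypothesis at parameters $(d-1,k-1)$, so it needs the statement for all ambient dimensions simultaneously. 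The paper's version has the minor advantage that the induction runs over $k$ alone with $d$ fixed; yours makes transparent why the threshold $\frac{p^{d-k}-1}{p-1}$ is exactly preserved at each step. The two are in a sense dual: the paper effectively passes to the quotient by a $k$-dimensional subspace at the last step (this is what the count of superspaces of $V$ is), whereas you pass to the quotient by a $1$-dimensional subspace at the first step.
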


$\textit{Proof:}$ We will use induction on $k$.

For the base case, $k=0$, it suffices to note that there are a total of $\frac{p^{d}-1}{p-1}$ lines through the origin and thus we can choose a line that is not part of $S$.

For the induction step, assume that we have proved the claim for $k=k_0-1$.  When $k=k_0 \geq 1$, we first find a $k_0$-dimensional subspace, $V$, that does not intersect $S$ (using the induction hypothesis).  Now there are 
$$\frac{p^d-p^{k_0}}{p^{k_0+1}-p^{k_0}}=\frac{p^{d-k_0}-1}{p-1}$$ different $k_0+1$-dimensional subspaces that contain $V$.  These $k_0+1$-dimensional subspaces must all be disjoint outside of $V$ since $V$ has dimension $k_0$. Using the fact that $|S| < \frac{p^{d-k_0}-1}{p-1}$, we see that there must be some $k_0+1$-dimensional subspace that does not intersect $S$.

\subsection{Finite field Heisenberg uncertainty principle}

Combining Theorem \ref{thm: equidist} with Theorem \ref{thm: lines}, we obtain the following result. 
\vskip.125in 
\begin{theorem}(Finite field Heisenberg Uncertainty Principle)
\label{theorem: bandwidth codimension}
Let $E$ be a nonempty set with coarse bandwidth $cbw(E)$ then $$((p-1)cbw(E)+1)|E| \geq p^d.$$
Let $\dim(E)=log_p(|E|)$ be the formal dimension of $E$. Let $c(E)=d-\dim(E)$ be the formal co-dimension of $E$. Then
$$ cbw(E) \geq \frac{p^c-1}{p-1} $$
where the right hand quantity is the formal number of lines in a fictionary vector space of dimension $c$. Thus $$bwd(E) \geq c(E),$$ or, equivalently, 
$$bwd(E)+\dim(E) \geq d,$$ which means that the bandwidth dimension is bounded below by the formal co-dimension.
\end{theorem}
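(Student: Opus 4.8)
The plan is to derive the Heisenberg-type inequality directly from the two equidistribution/covering results already proved, and then unwind the definitions of formal dimension and bandwidth dimension to obtain the equivalent forms. The key observation is that Theorem~\ref{thm: lines} tells us when a \emph{low-bandwidth} function is forced to have its Fourier support avoid a large subspace, and Theorem~\ref{thm: equidist} converts such avoidance into a lower bound on $|E|$.

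First I would prove the inequality $((p-1)cbw(E)+1)|E| \geq p^d$. Let $S$ be the set of $cbw(E)$ punctured lines through the origin containing $\mathrm{support}(\widehat{E})$. Choose $k$ to be the largest integer with $0 \leq k < d$ such that $cbw(E) = |S| < \frac{p^{d-k}-1}{p-1}$; equivalently $k$ is characterized by $\frac{p^{d-k-1}-1}{p-1} \leq cbw(E) < \frac{p^{d-k}-1}{p-1}$ (and if $cbw(E) \geq \frac{p^{d-1}-1}{p-1}$ take $k=0$, checking that case still works since $|E| \geq 1$). By Theorem~\ref{thm: lines} there is a $(k+1)$-dimensional subspace $U$ disjoint from $S$, hence $\widehat{E}$ vanishes on $U \setminus \vec 0$. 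Theorem~\ref{thm: equidist} then gives $|E| \geq p^{k+1}$. Combining with the upper bound $cbw(E) \geq \frac{p^{d-k-1}-1}{p-1}$, i.e. $(p-1)cbw(E)+1 \geq p^{d-k-1}$, we get $((p-1)cbw(E)+1)|E| \geq p^{d-k-1}\cdot p^{k+1} = p^d$, as desired. (One should double-check the boundary case where $cbw(E)=0$, i.e. $E$ constant and nonempty, hence $E$ is everything and $|E|=p^d$; the inequality reads $|E| \geq p^d$, which holds with equality.)

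Next I would deduce the remaining reformulations, which are essentially algebraic manipulations. Writing $c = c(E) = d - \dim(E)$ so that $|E| = p^{d-c}$, the inequality $((p-1)cbw(E)+1)|E| \geq p^d$ becomes $((p-1)cbw(E)+1) \geq p^c$, i.e. $cbw(E) \geq \frac{p^c-1}{p-1}$. Then, by the definition of $bwd(E)$ as the unique real number with $cbw(E) = \frac{p^{bwd(E)}-1}{p-1}$ and the strict monotonicity of $b \mapsto \frac{p^b-1}{p-1}$ recorded in the definition, the inequality $cbw(E) \geq \frac{p^{c(E)}-1}{p-1}$ is equivalent to $bwd(E) \geq c(E)$. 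Substituting $c(E) = d - \dim(E)$ gives $bwd(E) + \dim(E) \geq d$.

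The main obstacle is pinning down the correct choice of $k$ and handling the extremal cases cleanly: the strict inequality hypothesis in Theorem~\ref{thm: lines} means one must choose $k$ as large as possible subject to $cbw(E) < \frac{p^{d-k}-1}{p-1}$, and one must separately verify that the resulting bound $cbw(E) \geq \frac{p^{d-k-1}-1}{p-1}$ holds (it does, by maximality of $k$, since otherwise $k+1$ would also work), and that the edge cases $cbw(E)=0$ and $c(E)$ possibly non-integer cause no trouble. Everything else is bookkeeping with the two cited theorems.
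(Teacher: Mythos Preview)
Your proposal is correct and follows essentially the same route as the paper's proof: pick the integer threshold so that $cbw(E)$ sits in an interval of the form $\bigl[\tfrac{p^{j-1}-1}{p-1},\tfrac{p^{j}-1}{p-1}\bigr)$, apply Theorem~\ref{thm: lines} to find a large subspace avoided by the support of $\widehat{E}$, and then invoke Theorem~\ref{thm: equidist} to bound $|E|$ from below. Your index $k$ is simply the paper's $d-k$, and you are somewhat more explicit about the boundary cases $cbw(E)=0$ and $cbw(E)=\tfrac{p^d-1}{p-1}$ (which the paper's short proof glosses over) as well as about deducing the equivalent formulations in terms of $c(E)$ and $bwd(E)$.
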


\vskip.125in 

\begin{proof} We find the unique integer $k \geq 1$ such that $\frac{p^{k-1}-1}{p-1} \leq cbw(E) < \frac{p^k-1}{p-1}$. Now, applying  Theorem \ref{thm: lines}, there exists a   $d-k+1$-dimensional (punctured) subspace on which $\widehat{E}$ vanishes.  Combining this with Theorem~\ref{thm: equidist}, shows that $|E| \geq p^{d-k+1}$ and therefore $((p-1)cbw(E)+1)|E| \geq p^{k-1}p^{d-k+1}=p^d$.
\end{proof}

The last theorem, shows that a set $E$ of small (spatial) dimension must have large bandwidth dimension and vice versa. This is a direct analogue of the classical 
Heisenberg uncertainty principle.

\vskip.125in 

\vskip.125in 
\section{Fourier Transform on certain Algebraic Curves}
We now explore situations where $\widehat{f}$ vanishes on various algebraic varieties.

\begin{definition} Say a function $f$ is $\textit{good}$ if $\widehat{f}$ is supported on the set of points $ (x_1,x_2 \ldots ,x_d) $ satisfying $x_1^{2}+x_2^{2}+ \ldots + x_d^2=0$. \end{definition}

\begin{theorem} \label{paraboloid} Suppose that $f: {\Bbb Z}_p^d \to {\mathbb Q}$ and 
$$Z(\widehat{f}) \supset \{x \in {\Bbb Z}_p^d: x_d=x_1^2+x_2^2+\dots+x_{d-1}^2 \},$$ the paraboloid. Then if we let $f_a$ to be the function $f$ restricted to the plane $x_d=a$  (so $f_a$ is a function from $\mathbb{Z}_p^{d-1}$ to $\mathbb{Q}$) then for any $a,b$, $f_a-f_b$ is good as a function in $d-1$ dimensions. 

\end{theorem}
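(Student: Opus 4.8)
The plan is to express the $(d-1)$-dimensional Fourier transforms of the slices $f_a$ in terms of the $d$-dimensional Fourier transform of $f$ via a partial Fourier transform in the last coordinate, and then to use the rational vanishing principle (Theorem~\ref{thm: rationalFourier}) to upgrade the hypothesis ``$\widehat f$ vanishes on the paraboloid'' to ``$\widehat f$ vanishes on the whole punctured cone over the paraboloid,'' which turns out to be exactly enough.

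First I would write $x = (x', x_d)$ and $m = (m', m_d)$ with $x', m' \in \mathbb{Z}_p^{d-1}$, and peel off the last coordinate in (\ref{ftdef}) to get
$$\widehat f(m', m_d) = \frac{1}{p}\sum_{a \in \mathbb{Z}_p}\chi(-a m_d)\,\widehat{f_a}(m'),$$
where $\widehat{f_a}$ denotes the $(d-1)$-dimensional Fourier transform. This exhibits $\widehat f(m',\cdot)$ as the one-variable Fourier transform of $a \mapsto \widehat{f_a}(m')$, so Fourier inversion in the last slot gives $\widehat{f_a}(m') = \sum_{m_d \in \mathbb{Z}_p}\chi(a m_d)\,\widehat f(m', m_d)$. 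Subtracting the same identity for $b$, the $m_d = 0$ terms cancel and we obtain
$$\widehat{f_a}(m') - \widehat{f_b}(m') = \sum_{m_d \neq 0}\bigl(\chi(a m_d) - \chi(b m_d)\bigr)\,\widehat f(m', m_d).$$
Hence it suffices to prove that $\widehat f(m', m_d) = 0$ whenever $m_d \neq 0$ and $|m'|^2 := m_1^2 + \cdots + m_{d-1}^2 \neq 0$: for then, for any such $m'$, every summand on the right vanishes, so $\widehat{f_a - f_b}(m') = 0$, i.e. the support of $\widehat{f_a - f_b}$ is contained in $\{\,m' : |m'|^2 = 0\,\}$, which is precisely the assertion that $f_a - f_b$ is good in $d-1$ dimensions.

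To get the required vanishing I would invoke that $f$ is $\mathbb{Q}$-valued, so by the vanishing principle of Theorem~\ref{thm: rationalFourier}, $\widehat f$ vanishes not only on the paraboloid but on every punctured line through a point of it. The key geometric observation is then: given $(m', m_d)$ with $m_d \neq 0$ and $|m'|^2 \neq 0$, put $t = |m'|^2 / m_d \in \mathbb{F}_p^*$ and $n' = t^{-1} m'$; a direct check gives $t n' = m'$ and $t\,|n'|^2 = m_d$, so $(m', m_d) = t\cdot(n', |n'|^2)$ is a nonzero scalar multiple of the paraboloid point $(n', |n'|^2)$, whence $\widehat f(m', m_d) = 0$. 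Combined with the displayed identity above, this finishes the proof. No separate treatment is needed when $m' = \vec 0$ or $|m'|^2 = 0$, since those frequencies already lie in the ``good'' support set; note also that $\widehat f(\vec 0) = 0$ because $\vec 0$ lies on the paraboloid.

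The one genuinely new point — and the only place any hypothesis beyond a Fourier-analytic change of variables is used — is the step where rationality turns the paraboloid into its punctured cone, together with the observation that this cone contains every $(m', m_d)$ with $m_d \neq 0$ and $|m'|^2 \neq 0$. I expect identifying this to be the main obstacle; once it is in place, everything else is routine bookkeeping with the partial Fourier transform.
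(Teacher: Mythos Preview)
Your argument is correct, and it is genuinely different from the paper's route. Both proofs rest on the same geometric fact --- that the directions \emph{not} hit by the paraboloid are exactly the ``type 1'' directions $(m',m_d)$ with $m_d\neq 0,\ |m'|^2=0$ and the ``type 2'' directions with $m_d=0,\ |m'|^2\neq 0$ --- and both use the rational vanishing principle to pass from vanishing on the paraboloid to vanishing on its punctured cone. The paper then feeds this into the wavelet decomposition (Theorem~\ref{thm: planes}): it writes $f$ as a constant plus wavelets in type~1 and type~2 directions, observes that type~2 wavelets are constant in $x_d$ and hence drop out of $f_a-f_b$, while type~1 wavelets restrict to $(d-1)$-dimensional wavelets perpendicular to null directions and are therefore good. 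You instead bypass the wavelet machinery entirely, computing $\widehat{f_a}(m')-\widehat{f_b}(m')$ directly via the partial Fourier transform and noting that only frequencies $(m',m_d)$ with $m_d\neq 0$ survive, all of which lie in the punctured cone when $|m'|^2\neq 0$. Your approach is more elementary and self-contained; the paper's approach serves to illustrate the wavelet decomposition theorem, which is one of its central constructions.
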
 
\begin{proof} First, we claim that the set of directions determined by the paraboloid consists of all directions except those of the form $ (x_1,x_2 \ldots ,x_d) $ where $x_d \neq 0$ and $ x_1^2+x_2^2+\dots+x_{d-1}^2=0$ (call this type 1) or $x_d=0$ and $x_1^2+x_2^2+\dots+x_{d-1}^2 \neq 0$ (call this type 2).
Indeed, first if both $x_d$ and $ x_1^2+x_2^2+\dots+x_{d-1}^2$ are nonzero, the point 
$$\frac{x_d}{x_1^2+x_2^2+\dots+x_{d-1}^2}(x_1,x_2 \ldots ,x_d)$$ is on the paraboloid and if both $x_d$ and $ x_1^2+x_2^2+\dots+x_{d-1}^2$ are zero, the point $ (x_1,x_2 \ldots ,x_d) $ itself is on the paraboloid.

Now, we apply Theorem \ref{thm: planes} to complete the proof.  A given wavelet perpendicular to a direction of type 2 intersects all planes $x_d=a$ in the same way and hence contributes nothing to any of the differences $f_a-f_b$.  A given wavelet perpendicular to a direction of type 1 intersects the planes $x_d=a$ along $d-1$-dimensional wavelets perpendicular to directions of the form $(a_1,a_2, \cdots a_{d-1})$ for $ a_1^2+a_2^2+\dots+a_{d-1}^2=0$.  Each of these $d-1$-dimensional wavelets is $\textit{good}$ (viewed as a function over ${\Bbb Z}_p^{d-1}$) by Theorem \ref{thm: wavelet}.  Hence, any difference $f_a-f_b$ is a linear combination of $\textit{good}$ functions and hence is also $\textit{good}$.
\end{proof}

The situation becomes a bit more elaborate if the paraboloid is replaced by a sphere.
\\ 
In the following section, we present some properties of $\textit{good}$ functions.

\begin{theorem} Let $f: {\Bbb Z}_p^2 \to {\mathbb Q}$. Suppose that $\widehat{f}$ vanishes on $S_a \cup S_b$, where 
$$S_a=\{x \in {\Bbb Z}_p^d: x_1^2+x_2^2=a \}.$$ 

where $a$ is a quadratic residue modulo $p$ and $b$ is not. 

\vskip.125in 

i) Suppose that $p \equiv 3 \mod 4$. Then $f$ is constant. 

\vskip.125in 

ii) Suppose that $p \equiv 1 \mod 4$ and $f(x)=E(x)$ where $E \subset {\Bbb Z}_p^2$. Let 
$$L^{+}=\{(t,it): t \in {\Bbb Z}_p \} \ \text{and} \ L^{-}=\{(t,-it): t \in {\Bbb Z}_p \}.$$ 

Then $E$ is either a union of lines parallel to $L^{+}$ or a union of lines parallel to $L^{-}$. 

\end{theorem}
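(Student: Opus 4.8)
The plan is to determine exactly which punctured lines through the origin meet $S_a\cup S_b$, feed this into the rational vanishing principle (Theorem~\ref{thm: rationalFourier}) to locate $\mathrm{support}(\widehat f)$, and then, in case (ii), invoke the wavelet decomposition (Theorem~\ref{thm: planes}) together with an elementary sumset estimate over $\mathbb R$.

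First I would record the ``which directions get hit'' computation. For a nonzero direction $v$ and $c\neq0$, the punctured line $\{tv:t\neq0\}$ meets $S_c$ iff there is $t$ with $t^2(v_1^2+v_2^2)=c$, i.e.\ iff $v_1^2+v_2^2\neq 0$ and $c(v_1^2+v_2^2)$ is a (nonzero) square. Since $a$ is a nonzero square and $b$ a nonsquare, it follows that \emph{every} direction $v$ with $v_1^2+v_2^2\neq 0$ meets exactly one of $S_a$, $S_b$. When $p\equiv 3\pmod 4$ the quadratic form $x_1^2+x_2^2$ is anisotropic ($-1$ is a nonsquare), so $v_1^2+v_2^2\neq 0$ for every nonzero $v$; hence every punctured line through the origin meets $S_a\cup S_b$. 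As $f$ is rational-valued, Theorem~\ref{thm: rationalFourier} says that if $\widehat f$ vanishes at one point of a punctured line it vanishes on the whole punctured line, so here $\widehat f$ is supported at $\vec 0$ and $f$ is constant. This is part (i).

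For part (ii), fix $i\in\mathbb Z_p$ with $i^2=-1$; then $x_1^2+x_2^2=(x_1+ix_2)(x_1-ix_2)$, and the only directions with $v_1^2+v_2^2=0$ lie on $L^{+}$ and $L^{-}$. By the same argument, every punctured line other than $L^{+}\setminus\vec 0$ and $L^{-}\setminus\vec 0$ meets $S_a\cup S_b$, so $\mathrm{support}(\widehat E)\subseteq L^{+}\cup L^{-}$ and $cbw(E)\le 2$. Now apply Theorem~\ref{thm: planes} with the (self-perpendicular) vectors $s^{+}=(1,i)\in L^{+}$ and $s^{-}=(1,-i)\in L^{-}$; since $H_{s^{+},t}=\{x_1+ix_2=t\}$ and $H_{s^{-},t}=\{x_1-ix_2=t\}$ are precisely the lines parallel to $L^{+}$ and to $L^{-}$, the decomposition becomes
$$ 1_E(x)=c+g(x_1+ix_2)+h(x_1-ix_2) $$
for a rational constant $c$ and functions $g,h:\mathbb Z_p\to\mathbb Q$ (with $g(u)=|E\cap\{x_1+ix_2=u\}|/p$, similarly $h$). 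Because $x\mapsto(x_1+ix_2,x_1-ix_2)$ is a linear bijection of $\mathbb Z_p^2$, the value $c+g(u)+h(w)$ lies in $\{0,1\}$ for all $(u,w)$, so the sumset $c+g(\mathbb Z_p)+h(\mathbb Z_p)$ has at most two elements. The elementary bound $|G+H|\ge |G|+|H|-1$ for finite nonempty $G,H\subset\mathbb R$ then forces $g$ or $h$ to be constant. If $h$ is constant, $1_E$ is a function of $x_1+ix_2$ alone, so $E$ is a union of level lines $\{x_1+ix_2=t\}$, i.e.\ a union of lines parallel to $L^{+}=\{x_1+ix_2=0\}$; symmetrically, if $g$ is constant then $E$ is a union of lines parallel to $L^{-}$.

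The main obstacle is not a single deep step but the bookkeeping in the first computation: getting the square/nonsquare casework exactly right so that ``every non-isotropic direction is hit'' is literally correct (including the $c=0$ edge case, and the standing convention that $a$ denotes a \emph{nonzero} square), and being careful that the wavelet decomposition is applied with directions lying \emph{on} $L^{\pm}$ — which are isotropic, hence self-perpendicular for this bilinear form, so that the relevant hyperplanes $H_{s^{\pm},t}$ are exactly the lines parallel to $L^{\pm}$. The sumset inequality is routine (order the elements) and the vanishing principle is quoted, so those present no difficulty.
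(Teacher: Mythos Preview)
Your proof is correct. The overall architecture matches the paper's: both arguments first use the rational vanishing principle to show that $\mathrm{support}(\widehat f)$ is contained in the null cone $\{x_1^2+x_2^2=0\}$, which is $\{\vec 0\}$ when $p\equiv 3\pmod 4$ (giving (i) immediately) and $L^{+}\cup L^{-}$ when $p\equiv 1\pmod 4$.

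Where you diverge is in finishing part (ii). The paper's proof simply asserts that once $\mathrm{support}(\widehat E)\subseteq L^{+}\cup L^{-}$, ``we again obtain the desired statement'' via Theorem~\ref{thm: planes}; implicitly this leans on the argument of Theorem~\ref{thm: lowerboundcbw1} specialized to $d=2$ (which forces $E$ to be a union of the hyperplanes $H_{p_i,t}$ for one of the two basis directions, and here those hyperplanes are exactly the lines parallel to $L^{\pm}$ by self-perpendicularity). Your route is more self-contained: you write out the two-wavelet decomposition $1_E=c+g(x_1+ix_2)+h(x_1-ix_2)$ and then use the real sumset bound $|G+H|\ge |G|+|H|-1$ to force $|g(\mathbb Z_p)|=1$ or $|h(\mathbb Z_p)|=1$. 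This buys you an argument that does not require revisiting the proof of Theorem~\ref{thm: lowerboundcbw1}, and the sumset step is a clean replacement for the integrality casework there. Both approaches ultimately exploit the same structural fact---that a $\{0,1\}$-valued function which is a sum of two independent one-variable functions must be constant in one of the variables---so the difference is one of packaging rather than substance.
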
 

\vskip.125in 
\begin{proof} This Theorem is a relatively direct consequence of Theorem \ref{thm: planes}.  We see that the support of $\widehat{f}$ must be contained in the set of points such that $x_1^2+x_2^2=0$.  For part (i), this set is empty so $f$ must be constant.  For part (ii) this set consists of $L^{+}$ and $L^{-}$ so we again obtain the desired statement.
\end{proof}

\begin{theorem} Let $f: {\Bbb Z}_p^d \to {\mathbb Q}$ where $d$ is even and $p>2$. Suppose that $\widehat{f}$ vanishes on $S_a \cup S_b$, where $S_a$ is defined as above (in higher dimensions), $a$ is a residue modulo $p$ and $b$ is not. Then $f$ is equi-distributed on spheres of non-zero radius centered at an arbitrary point in ${\Bbb Z}_p^d$. 
\end{theorem}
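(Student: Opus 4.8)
The plan is to first reduce the hypothesis to the statement that $f$ is \emph{good}, exactly as in the two-dimensional case treated above, and then to run a short convolution argument in the spirit of Theorem~\ref{thm: equidist}, with the indicator function of a sphere playing the role that the indicator of $V^{\perp}$ played there. \emph{Step 1 (the function $f$ is good).} Since $f$ is rational valued, the vanishing principle contained in Theorem~\ref{thm: rationalFourier} upgrades the hypothesis ``$\widehat f$ vanishes on $S_a\cup S_b$'' to ``$\widehat f$ vanishes on the union of all punctured lines through $S_a\cup S_b$''. If $x$ lies on the sphere $x_1^2+\dots+x_d^2=c$, then $(tx_1)^2+\dots+(tx_d)^2=t^2c$, so dilating the sphere of radius $c$ by all $t\in\mathbb Z_p^{*}$ sweeps out precisely the spheres whose radius is a nonzero square times $c$. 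As $a$ is a nonzero square and $b$ is a nonsquare, and every sphere in $\mathbb Z_p^d$ is nonempty when $d\ge 2$, these two cones together cover every sphere of nonzero radius. Hence $\operatorname{support}(\widehat f)\subseteq\{m\in\mathbb Z_p^d:\ m_1^2+\dots+m_d^2=0\}$; that is, $f$ is good.

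\emph{Step 2 (reformulating equidistribution).} For a center $y\in\mathbb Z_p^d$ and a radius $r$, write $\|z\|^2:=z_1^2+\dots+z_d^2$ and $m_r(y):=\sum_{x:\,\|x-y\|^2=r}f(x)$. Since the sphere $S_r=\{z:\|z\|^2=r\}$ is symmetric under $z\mapsto -z$, the function $y\mapsto m_r(y)$ is the convolution $f\star 1_{S_r}$, so $\widehat{m_r(\cdot)}(m)=p^d\,\widehat f(m)\,\widehat{1_{S_r}}(m)$. Therefore, to prove that $m_r\equiv m_{r'}$ for all nonzero radii $r,r'$ it suffices, by Fourier inversion, to show that $\widehat{1_{S_r}}(m)=\widehat{1_{S_{r'}}}(m)$ at every $m$ with $\widehat f(m)\neq 0$; by Step 1, this only needs to be checked for $m$ on the null cone $\{m:\|m\|^2=0\}$.

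\emph{Step 3 (a Gauss sum computation).} Insert $1_{\{\|x\|^2=r\}}=p^{-1}\sum_{s\in\mathbb Z_p}\chi\big(s(\|x\|^2-r)\big)$ into the definition of $\widehat{1_{S_r}}(m)$. The $s=0$ term contributes $p^{-1}\delta_{m,\vec{0}}$ after summing in $x$, while for each $s\neq 0$ the $x$-sum factors over coordinates into $d$ classical quadratic Gauss sums $\sum_{t\in\mathbb Z_p}\chi(st^2-tm_j)=\eta(s)\,g\,\chi(-m_j^2/(4s))$, where $g=\sum_t\chi(t^2)$ and $\eta$ is the quadratic character (here $p>2$ is used to complete the square); their product is $\eta(s)^d g^d\,\chi(-\|m\|^2/(4s))$. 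This is where the hypothesis that $d$ is even enters: $\eta(s)^d=1$ and $g^d=(\eta(-1)p)^{d/2}$, which is independent of $s$. Hence
\[
\widehat{1_{S_r}}(m)=p^{-d-1}\Big(p^d\,\delta_{m,\vec{0}}+\eta(-1)^{d/2}p^{d/2}\sum_{s\neq 0}\chi(-sr)\,\chi(-\|m\|^2/(4s))\Big).
\]
On the null cone $\|m\|^2=0$ the factor $\chi(-\|m\|^2/(4s))$ equals $1$, and $\sum_{s\neq 0}\chi(-sr)=-1$ whenever $r\neq 0$, so $\widehat{1_{S_r}}(m)=p^{-d-1}\big(p^d\delta_{m,\vec{0}}-\eta(-1)^{d/2}p^{d/2}\big)$, a quantity that does not depend on $r$ as long as $r\neq 0$. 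Combined with Step 2 this gives $m_r\equiv m_{r'}$ for all nonzero $r,r'$, i.e. $f$ has the same mass on every sphere of nonzero radius about any fixed center; since $d$ is even all such spheres have the same cardinality, so the averages of $f$ over them agree as well, which is the asserted equidistribution.

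\emph{Main obstacle.} Once Step 1 is in place the argument is short, and the only genuine computation is the Gauss sum evaluation of Step 3. The delicate point is that ``$d$ even'' is exactly what makes $\eta(s)^d$ and $g^d$ independent of $s$: for odd $d$ the surviving factor $\eta(s)$ turns $\sum_{s\neq 0}\chi(-sr)\eta(s)$ into a Gauss sum proportional to $\eta(r)$, so one would only obtain equidistribution separately over square-radius spheres and over nonsquare-radius spheres, which is why the parity hypothesis cannot be dropped. One must also keep track of the degenerate frequency $m=\vec{0}$ throughout, but this causes no trouble precisely because $|S_r|$ is the same for all $r\neq 0$ when $d$ is even.
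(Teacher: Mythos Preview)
Your proof is correct, but it follows a genuinely different route from the paper after the shared Step~1. Once $f$ is known to be good, the paper invokes the reduced wavelet decomposition (Theorem~\ref{thm: planes}) to write $f$ as a constant plus a linear combination of indicators $1_{H_{v,k}}$ with $v\cdot v=0$ and $k\neq 0$, and then argues geometrically: for $x\in H_{v,k}$ one has $x+v\in H_{v,k}$ and $\|x+v\|^2=\|x\|^2+2k$, so each such hyperplane is foliated by lines $\{x+tv:t\in\mathbb Z_p\}$ along which $\|\cdot\|^2$ runs bijectively over $\mathbb Z_p$ (since $2k\neq 0$), and hence each individual $1_{H_{v,k}}$ already equidistributes on all spheres about the origin. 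Translation invariance of the wavelet picture then handles arbitrary centers. Your argument instead bypasses the wavelet decomposition entirely: you realize $m_r(\cdot)=f\star 1_{S_r}$ and compute $\widehat{1_{S_r}}$ explicitly via Gauss sums, showing it is constant in $r\neq 0$ on the null cone. The paper's approach is more elementary, avoids any character-sum machinery, and keeps the proof inside the wavelet framework built earlier; your approach is more analytic, makes the role of the parity hypothesis completely explicit through $\eta(s)^d=1$, and as you note in your final remark immediately yields the natural refinement for odd $d$ (equidistribution separately over square and nonsquare radii), which the paper's line-foliation argument does not directly give.
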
 
\begin{proof} Again we  will use Theorem \ref{thm: planes}.  We see that the condition is equivalent to saying that $f$ is $\textit{good}$ so we can write $f$ as the sum of a constant and a linear combination of indicator functions of $d-1$-dimensional planes of the form $v \cdot x=k$  with $k \neq 0$ and $v \cdot v=0$.  Now for such a plane  and a point $x$ on it, $x+v$ is also on the plane and $(x+v) \cdot (x+v)= x \cdot x + 2k$.  This means that each such plane is uniformly distributed across all spheres centered around the origin since we can partition each plane into lines with one point on each sphere.  Now, each sphere of nonzero radius has the same number of points (counting with Jacobi sums shows that each sphere of nonzero radius has $p^d-p^{\frac{d}{2}-1}$ points) so the constant function is also equi-distributed across all spheres of nonzero radius.  To complete the proof, it suffices to note that the choice of the origin was arbitrary so we can translate it to an arbitrary point.
\end{proof}

\section{Basis of eigenfunctions of the Fourier transform}

The unnormalized prime field Fourier transform can be viewed as a linear transformation from $W \to W$ where $W$ is the $\mathbb{C}$-vector space of complex valued functions on $\mathbb{Z}_p^d$. Since 
$$ \hat{f}(m) = p^{-d} \sum_{x \in \mathbb{Z}_p^d} f(x) \chi(-x \cdot m) $$
with respect to the basis of delta-functions, it is given by a $p^d \times p^d$ Hadamard matrix $H$ of Butson type whose ``$(x,m)$-entry'' is $\chi(-x \cdot m)$ (see for example \cite{REUPAPER} for details). 

The distinct rows (and columns) of $H$ are orthogonal under the usual Hermitian inner product on $\mathbb{C}^{p^d}$. In fact $H = p^{d/2} T$ where $T$ is a unitary matrix.
Thus the eigenvalues of the unnormalized Fourier transform are complex numbers of norm $p^{d/2}$ and $H$ is unitarily diagonalizable, i.e. there is a basis of $W$ 
consisting of eigenfunctions of the (unnormalized) Fourier transform. From this it follows that there is a basis of $W$ using eigenfunctions of the regular (normalized) Fourier 
transform whose eigenvalues are complex numbers of norm $\frac{p^{d/2}}{p^d}=\frac{1}{p^{d/2}}$.

This basis of eigenfunctions of the Fourier transform forms yet another basis for the space of complex valued functions on $\mathbb{Z}_p^d$ that is useful on occasions just as the 
wavelet basis is.

Our next result characterizes which indicator functions of sets are eigenfunctions of the Fourier transform. 
In other words, it studies sets $E \subset {\Bbb Z}_p^d$ such that $\widehat{E}(m)=\lambda E(m)$ for some constant $\lambda \in {\mathbb C}$. We shall refer to such sets as self Fourier dual. 

\begin{definition} We say that a subspace $L$ of ${\Bbb Z}_p^d$ is Lagrangian if $L=L^{\perp}$. \end{definition} 

Note that Lagrangian subspaces only exist when $d$ is even and have dimension $\frac{d}{2}$. 

\begin{theorem} Let $E \subset {\Bbb Z}_p^d$ be a self Fourier dual set with $\hat{E}=\lambda E$. 
Then either $E$ is the empty set or $d$ is even, $E$ is a Lagrangian subspace and $\lambda = \sqrt{\frac{1}{p^{d}}}$. \end{theorem}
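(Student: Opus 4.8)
The plan is to extract the eigenvalue and the size of $E$ from formal Fourier identities, and then to read the subspace structure directly off the equation $\widehat{E} = \lambda E$. Assume $E \neq \emptyset$. Evaluating at the origin, $\widehat{E}(\vec{0}) = p^{-d}|E|$ is a positive real number, so $\widehat{E}(\vec{0}) = \lambda E(\vec{0})$ forces $\vec{0} \in E$ and $\lambda = p^{-d}|E| > 0$. Plancherel gives $|\lambda|^2|E| = \sum_m |\widehat{E}(m)|^2 = p^{-d}\sum_x |E(x)|^2 = p^{-d}|E|$, whence $|\lambda| = p^{-d/2}$ (this also follows from the remarks opening Section 6, that every eigenvalue of the normalized Fourier transform has modulus $p^{-d/2}$). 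Combining $\lambda = p^{-d}|E|$ with $|\lambda| = p^{-d/2}$ yields $|E| = p^{d/2}$; since $|E|$ is a positive integer this forces $d$ even, and then $\lambda = p^{-d/2} = \sqrt{1/p^{d}}$.

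The key step is the following. Clearing the factor $p^{-d}$ in $\widehat{E}(m) = \lambda E(m)$ gives $\sum_{x \in E}\chi(-x\cdot m) = p^{d/2} E(m) = |E|\,E(m)$ for every $m \in \mathbb{Z}_p^d$. When $m \in E$ the left-hand side is a sum of $|E|$ complex numbers of modulus $1$ whose value is the real number $|E|$; equality in the triangle inequality forces every summand $\chi(-x\cdot m)$ to equal $1$, i.e.\ $x \cdot m \equiv 0$ for all $x \in E$. Hence every $m \in E$ lies in $E^{\perp} := \{ y \in \mathbb{Z}_p^d : y \cdot x = 0 \text{ for all } x \in E \}$; that is, $E \subseteq E^{\perp}$.

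It remains to run a dimension count. Let $V = \mathrm{span}_{\mathbb{Z}_p}(E)$ and $s = \dim V$, so that $E^{\perp} = V^{\perp}$ and $\dim V^{\perp} = d - s$. From $E \subseteq V$ we get $p^{d/2} = |E| \le p^{s}$, so $s \ge d/2$; from $E \subseteq E^{\perp} = V^{\perp}$ we get $p^{d/2} = |E| \le p^{d-s}$, so $s \le d/2$. Therefore $s = d/2$, which gives $|V| = p^{d/2} = |E|$ and hence $E = V$, a subspace of dimension $d/2$. Finally $E = V \subseteq V^{\perp} = E^{\perp}$ with $\dim E = \dim E^{\perp} = d/2$ forces $E = E^{\perp}$, so $E$ is Lagrangian.

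I expect the crux to be the second step: recognizing that evaluating the eigenfunction equation at a point of the support of $\widehat{E}$ puts us in the equality case of the triangle inequality for roots of unity, which is what produces the isotropy relation $E \subseteq E^{\perp}$ out of thin air. The rest is bookkeeping; the only point needing a little care is the integrality argument in the first step, which is what actually delivers the parity constraint that $d$ is even.
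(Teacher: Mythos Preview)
Your proof is correct and takes a genuinely different route from the paper's. The paper exploits the idempotency $E^2 = E$ on the Fourier side: from $(\hat{E}/\lambda)^2 = \hat{E}/\lambda$ it passes via the convolution theorem to $E \star E = c\,E$, which immediately shows $E$ is closed under addition and hence is a subspace; only afterwards does it observe that $\mathrm{supp}(\hat{E}) = E^{\perp}$ for a subspace forces $E = E^{\perp}$, and then reads off $\lambda$. You instead front-load the numerics with Plancherel to get $|E| = p^{d/2}$ and the parity of $d$ right away, and then use the equality case of the triangle inequality on $\sum_{x\in E}\chi(-x\cdot m) = |E|$ to obtain the isotropy relation $E \subseteq E^{\perp}$ directly, finishing with a span/dimension squeeze. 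The paper's convolution argument is slicker in proving $E$ is a subgroup in one stroke and avoids Plancherel and any inequality; your approach is perhaps more elementary and has the pleasant feature that it delivers the evenness of $d$ and the exact value of $\lambda$ before any structural analysis, and the triangle-inequality step is a nice self-contained observation that could be reused whenever an indicator function is a Fourier eigenfunction.
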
 
\begin{proof}
Let $E \subseteq \mathbb{Z}_p^d$ and $\hat{E} = \lambda E$ for some $\lambda \in \mathbb{C}$. If $\lambda = 0$ then $\hat{E}=0$ and so $E = \emptyset$. Thus we are done in this case so 
assume $\lambda \neq 0$ and $E \neq \emptyset$. In this case the support of $\hat{E}$ is exactly the set $E$ and so $0 \in E$ as $\hat{E}(0) = \frac{|E|}{p^{d}} \neq 0$.

Since $\hat{E}/\lambda = E$ and $E^2=E$ we have $(\hat{E}/\lambda) \cdot (\hat{E}/\lambda) = \hat{E}/\lambda$. Taking the inverse Fourier transform of both sides and using that product becomes convolution we have $E \star E = \mu \lambda E$ for some nonzero normalization $\mu$. Given $x, y \in E$, $x+y$ is in the support of $E \star E$ and hence is in $E$ as $E \star E = \mu \lambda E$. Thus $E$ is a subspace of $\mathbb{Z}_p^d$. 

Finally when $E$ is a subspace, the support of $\hat{E}$ is exactly $E^{\perp}$, the perpendicular subspace. Thus $\hat{E} = \lambda E$ forces $E=E^{\perp}$ in this case, i.e., $E$ is a Lagrangian subspace of $\mathbb{Z}_p^d$. As $\dim(E) + \dim(E^{\perp}) = d$ in general, Lagrangian subspaces only exist when 
$d$ is even.  $\hat{E}=\lambda E$ evaluated at $0$ yields $\frac{|E|}{p^d} = \lambda$. The Theorem follows.
\end{proof}

More generally we can find eigenfunctions consisting of linear combinations of subspaces and their perpendicular subspace:

\begin{proposition}
Let $V \subseteq \mathbb{Z}_p^d$ be vector subspace of dimension $k$ and let $f_+=p^{\frac{d}{2}-k}1_V + 1_{V^{\perp}}$ 
and $f_-=p^{\frac{d}{2}-k}1_V - 1_{V^{\perp}}$. Then as long as $V$ is not a Lagrangian subspace, $f_+, f_-$ are (linearly independent) real-valued eigenfunctions of the Fourier transform corresponding 
to eigenvalues $\pm \frac{1}{p^{d/2}}$.
\end{proposition}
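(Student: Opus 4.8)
The plan is to reduce the whole proposition to one elementary character-sum computation: for any subspace $W\subseteq\mathbb{Z}_p^d$ of dimension $j$ one has $\widehat{1_W}=p^{\,j-d}\,1_{W^{\perp}}$. This is immediate from the definition (\ref{ftdef}) together with orthogonality of additive characters. Writing $\widehat{1_W}(m)=p^{-d}\sum_{x\in W}\chi(-x\cdot m)$, if $m\in W^{\perp}$ the summand equals $1$ for every $x\in W$, so the sum is $|W|=p^{j}$; if $m\notin W^{\perp}$ the map $x\mapsto x\cdot m$ is a nontrivial linear functional on $W$, hence attains each value of $\mathbb{Z}_p$ exactly $p^{\,j-1}$ times, and $p^{\,j-1}\sum_{u\in\mathbb{Z}_p}\chi(-u)=0$. (This is the same computation already carried out for $W=H_{s,0}$ in the proof of Theorem~\ref{thm: wavelet}.)

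Next I would apply this identity twice. Taking $W=V$ (dimension $k$) gives $\widehat{1_V}=p^{\,k-d}1_{V^{\perp}}$, and taking $W=V^{\perp}$ (dimension $d-k$, with $(V^{\perp})^{\perp}=V$ because the dot product on $\mathbb{Z}_p^d$ is non-degenerate) gives $\widehat{1_{V^{\perp}}}=p^{-k}1_V$. By $\mathbb{C}$-linearity of the Fourier transform,
$$\widehat{f_{\pm}} \;=\; p^{\frac{d}{2}-k}\,\widehat{1_V}\;\pm\;\widehat{1_{V^{\perp}}} \;=\; p^{\frac{d}{2}-k}p^{\,k-d}\,1_{V^{\perp}}\;\pm\;p^{-k}\,1_V \;=\; p^{-d/2}\,1_{V^{\perp}}\;\pm\;p^{-k}\,1_V \;=\; \pm\frac{1}{p^{d/2}}\Bigl(p^{\frac{d}{2}-k}1_V\pm 1_{V^{\perp}}\Bigr) \;=\; \pm\frac{1}{p^{d/2}}\,f_{\pm}.$$
Hence $f_+$ is an eigenfunction with eigenvalue $\tfrac{1}{p^{d/2}}$ and $f_-$ with eigenvalue $-\tfrac{1}{p^{d/2}}$, and both are manifestly real-valued since $p^{\frac{d}{2}-k}$ is a positive real number and $1_V,1_{V^{\perp}}$ are real-valued.

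It then remains to verify that $f_+$ and $f_-$ are linearly independent, and this is the only point where the non-Lagrangian hypothesis enters. Since $p\ge 2$, the eigenvalues $\pm\tfrac{1}{p^{d/2}}$ are distinct, so it suffices to check that each of $f_+,f_-$ is nonzero. Evaluating at $\vec 0\in V\cap V^{\perp}$ gives $f_{\pm}(\vec 0)=p^{\frac{d}{2}-k}\pm 1$, and $f_+(\vec 0)=p^{\frac{d}{2}-k}+1>0$ always. If $f_-$ were identically zero then $p^{\frac{d}{2}-k}1_V=1_{V^{\perp}}$; evaluating at $\vec 0$ forces $p^{\frac{d}{2}-k}=1$, i.e.\ $k=\tfrac{d}{2}$, and then $1_V=1_{V^{\perp}}$ forces $V=V^{\perp}$, contradicting the assumption that $V$ is not Lagrangian. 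Thus $f_-\ne 0$ and the proposition follows. There is no genuinely hard step here; the only mild subtlety is recognizing exactly where the hypothesis is used, namely to ensure that $\{1_V,1_{V^{\perp}}\}$ spans a two-dimensional space, so that $f_+$ and $f_-$ are indeed independent.
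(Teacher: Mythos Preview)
Your proof is correct and follows essentially the same approach as the paper: both arguments rest on the identity $\widehat{1_W}=p^{\,j-d}1_{W^{\perp}}$ for a subspace $W$ of dimension $j$, applied to $V$ and $V^{\perp}$, followed by linearity. You supply more detail than the paper does (an explicit character-sum justification of the key identity and a careful verification of where the non-Lagrangian hypothesis is needed), but the structure and ideas are the same.
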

\begin{proof}

Note that if $V$ is a Lagrangian subspace then $f_{-}=0$. Otherwise $f_{+}$ and $f_{-}$ are nonzero functions.

A direct calculation shows that $\widehat{1_V} = \frac{|V|}{p^d} 1_{V^{\perp}} = \frac{1}{p^{d-k}} 1_{V^{\perp}}$ for any subspace $V$ of dimension $k$. 
Thus we also have $\widehat{1_{V^\perp}} = \frac{1}{p^k} 1_V$.  Then we compute:
$$ \widehat{f_+} = p^{\frac{d}{2}-k} \left(\frac{1}{p^{d-k}}1_{V^{\perp}}\right) + \frac{1}{p^k} 1_V = \frac{1}{p^{d/2}} f_{+}.$$

A similar computation works for $f_{-}$.
\end{proof}

To handle affine subspaces that do not go through the origin, we define the phase function $\phi_x(m)=\chi(-x \cdot m)$ for all $x, m \in \mathbb{Z}_p^d$. 
Note that $\bar{\phi}_x = \phi_{-x}$.

\begin{proposition}
\label{pro:eigenbasis}
Let $V \subseteq \mathbb{Z}_p^d$ be a vector subspace of dimension $k$ and let $x \in \mathbb{Z}_p^d$. 
Define $f_+=p^{\frac{d}{2}-k}1_{V+x} + \phi_{-x}1_{V^{\perp}}$ 
and $f_-=p^{\frac{d}{2}-k}1_{V+x} - \phi_{-x} 1_{V^{\perp}}$. Then as long as $V+x$ is not a Lagrangian subspace through the origin, $f_+, f_-$ are (linearly independent) eigenfunctions of the conjugate Fourier transform (Fourier transform followed by complex conjugation) corresponding 
to real eigenvalues $\pm \frac{1}{p^{d/2}}$. 
\end{proposition}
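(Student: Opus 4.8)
The plan is to reduce this affine statement to the known linear case (the preceding proposition) by conjugating the Fourier transform with the translation/modulation operators, and then checking that the hypothesis "$V+x$ is not a Lagrangian subspace through the origin" is exactly what guarantees $f_-\neq 0$ and the two functions are independent. First I would record the two basic covariance identities: if $g(y)=h(y-x)$ then $\widehat g(m)=\phi_x(m)\widehat h(m)$, and dually if $g=\phi_{-x} h$ then $\widehat g(m)=\widehat h(m+x)$ in the normalization of \eqref{ftdef}. So translating $1_V$ to $1_{V+x}$ introduces a phase $\phi_x$ on the Fourier side, and multiplying $1_{V^\perp}$ by $\phi_{-x}$ shifts the support of its transform — but since $V$ is a subspace, $\widehat{1_V}$ is a scalar multiple of $1_{V^\perp}$ and is supported at a subspace, so the shift does not actually matter there. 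Keeping careful track of which side gets which phase is the only real bookkeeping.

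Concretely I would compute $\widehat{f_+}$ directly. Using $\widehat{1_V}=\frac{1}{p^{d-k}}1_{V^\perp}$ and $\widehat{1_{V^\perp}}=\frac{1}{p^k}1_V$ from the previous proposition, plus the two covariance identities above, one gets
$$\widehat{f_+}=p^{\frac d2-k}\,\phi_x\cdot\frac{1}{p^{d-k}}1_{V^\perp}+\frac{1}{p^k}1_{V+x}=\frac{1}{p^{d/2}}\Big(\phi_x\,1_{V^\perp}+p^{\frac d2-k}1_{V+x}\Big).$$
This is $\frac{1}{p^{d/2}}\overline{f_+}$ once one observes $\overline{\phi_{-x}}=\phi_x$, so applying complex conjugation after $\widehat{\ }$ returns $\frac{1}{p^{d/2}}f_+$; i.e. $f_+$ is an eigenfunction of the conjugate Fourier transform with eigenvalue $\frac{1}{p^{d/2}}$. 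The computation for $f_-$ is identical except for a sign, giving eigenvalue $-\frac{1}{p^{d/2}}$.

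It remains to handle the nondegeneracy claim. The two functions $1_{V+x}$ and $\phi_{-x}1_{V^\perp}$ are supported on $V+x$ and on $V^\perp$ respectively; these supports coincide only when $V+x=V^\perp$, which (since $V^\perp$ is a subspace through the origin) forces $x\in V^\perp$, hence $V+x=V$ and $V=V^\perp$ — a Lagrangian subspace through the origin, the excluded case. When the supports are not equal, the two characteristic functions are linearly independent, so $f_+,f_-$ are nonzero and independent. I expect the only mild obstacle to be getting the phase conventions consistent with the sign choices in \eqref{ftdef}: one must verify that $\phi_{-x}$ (not $\phi_x$) is the correct modulation to put on the $1_{V^\perp}$ summand so that the phases cancel after conjugation — this is forced by the direction of the translation applied to $V$ and is the reason the proposition is stated for the \emph{conjugate} Fourier transform rather than the Fourier transform itself. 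Everything else is the routine linearity argument already used in the non-affine proposition.
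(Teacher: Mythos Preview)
Your proof is correct and follows essentially the same route as the paper: both compute $\widehat{f_\pm}$ directly via the identities $\widehat{1_{V+x}}=\frac{1}{p^{d-k}}\phi_x 1_{V^\perp}$ and $\widehat{\phi_{-x}1_{V^\perp}}=\frac{1}{p^k}1_{V+x}$, then recognize the result as $\frac{1}{p^{d/2}}\overline{f_\pm}$. Your discussion of the nondegeneracy (linear independence) claim is in fact more detailed than the paper's, which simply asserts that $f_\pm\neq 0$ when $V+x$ is not a Lagrangian subspace through the origin.
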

\begin{proof}
Note that if $V+x$ is a Lagrangian subspace through the origin then $V+x=V$ and $f_{-}=0$. Otherwise $f_{+}$ and $f_{-}$ are nonzero functions.

A direct calculation shows that $\widehat{1_{V+x}} = \frac{|V|}{p^d} \phi_x 1_{V^{\perp}} = \frac{1}{p^{d-k}} \phi_{x} 1_{V^{\perp}}$ for any vector subspace $V$ of dimension $k$. 
By inverse Fourier transform, $\widehat{ \phi_{-x}1_{V^{\perp}}} = \frac{1}{p^k} 1_{V+x}$. 
Then we compute:
$$ \widehat{f_+} = p^{\frac{d}{2}-k} \left(\frac{1}{p^{d-k}} \phi_x 1_{V^{\perp}}\right) + \frac{1}{p^k} 1_{V+x} = \frac{1}{p^{d/2}} \bar{f}_{+}. $$
A similar computation works for $f_{-}$.
\end{proof}

Note using the last proposition, it follows that the indicator function of any affine subspace $1_{V+x}$ can be written as a linear combination of eigenfunctions of the conjugate Fourier transform as described in that proposition. Applying this to the wavelet decomposition of any function, we see that any function is a linear combination of eigenfunctions of the conjugate Fourier transform of the form described in Proposition~\ref{pro:eigenbasis} applied to hyperplanes. 

\vskip.125in 

\section{Wavelets over ${\Bbb Z}_{p^l}$: multi-scale analysis} 

\vskip.125in 

In order to set up wavelets in ${\Bbb Z}_{p^l}^d$, we need to take a brief aside to discuss the basic geometry in this setting. Scalars in ${\Bbb Z}_{p^l}$ have a $p$-adic valuation given as follows. Let $n \in {\Bbb Z}_{p^l}$, a non-zero element. Then we can write $n=p^ju$, where $u$ is relatively prime to $p$ and hence is an invertible element (unit) in ${\Bbb Z}_{p^l}$. Recall that there are $p^l-p^{l-1}$ such units. The $p$-adic valuation of $n$, denoted by $\nu_p(n)$ is the integer $j$ in the decomposition of $n$. In particular, the $p$-adic valuation of units is $0$ and the range of $\nu_p$ is $\{0,1,2,\dots, l-1\}$. The $p$-adic norm (size) of $n$, denoted by ${||n||}_p$ is $\frac{1}{p^{\nu_p(n)}}$. Note that the higher the valuation, the smaller the norm and hence both identify the "scale" at which the element resides. 

We work in ${\Bbb Z}_{p^l}^d$, the $d$-dimensional {\it free} module over ${\Bbb Z}_{p^l}$. In this setting, the $p$-adic norm of a vector 
$v \in {\Bbb Z}_{p^l}^d$, denoted by 
$$\nu_p(v)=\min_{1 \leq i \leq d} \nu_p(v_i) \ \text{and} \ {||v||}_p=\max_{1 \leq i \leq d} {||v_i||}_p,$$ where $v_i$ is the $i$th coordinate of $v$. 

We now define a line generated by a non-zero vector $v \in {\Bbb Z}_{p^l}^d$. These lines will have different sizes depending on ${||v||}|_p$ and will be viewed as lines at different scales. 

\begin{definition} [LINES] Given a non-zero $v \in {\Bbb Z}_{p^l}^d$, define 
$$ l_v=\{av: a \in {\Bbb Z}_{p^l} \},$$ the line generated by $v$. If $\nu_p(v)=k$, then $l_v$ is isomorphic as an additive group to ${\Bbb Z}_{p^{l-k}}$ and in particular has $p^{l-k}$ points. We call such a line a {\it level} $l-k$ line. More generally we use affine lines, which are just translates of the lines defined above. \end{definition} 

Note that the valuation of a vector gets larger, the number of points on the line it generates becomes smaller. In other words, the lines at higher levels are bigger. Also observe that a general affine line at level $s$ is a union of $p$ disjoint affine lines of level $s-1$. 

\begin{definition} [HYPERPLANES] Given a non-zero $v \in {\Bbb Z}_{p^l}^d$, define the hyperplane 
$$H_v=\{x \in {\Bbb Z}_{p^l}^d: x \cdot v=0 \},$$ the hyperplane through the origin perpendicular to $v$. \end{definition} 

Observe that 
$$ |H_v|=p^{l(d-1)+\nu_p(v)}.$$

Hence as the valuation of $\nu_p$ increases, the size of the hyperplanes increases. Equivalently, as the level $v$ increases, the size of the hyperplane decreases. One way to think about the difference with the case of the line above is that lines and hyperplanes are dual. So if $v$ determines a small line, it determines a large hyperplane. 

We now define wavelets in this context. We give a definition at level $l$ (corresponding to the least degenerate lines) with the other levels defined analogously. 

\begin{definition} A wavelet at level $l$ is a function from ${\Bbb Z}_{p^l}^d \to {\Bbb C}$ whose Fourier transform is supported at an affine line at level $l$. \end{definition} 

One can establish the following equivalence using the arguments similar to those used in the field case. 
\begin{theorem} The function $f: {\Bbb Z}_{p^l}^d \to {\Bbb C}$ is wavelet of level $l$ if 
$$ f(x)=\sum c_i 1_{H_i}(x),$$ where $\{H_i\}$ is a family of parallel hyperplanes at level $l$. Here the level of the hyperplane is determined by the level of the vector it is perpendicular to. 
\end{theorem}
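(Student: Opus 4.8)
The plan is to mimic the field-case proof of the Wavelet Theorem (Theorem~\ref{thm: wavelet}) and Wavelet Decomposition Theorem (Theorem~\ref{thm: planes}), replacing the prime field ${\Bbb Z}_p$ by the ring ${\Bbb Z}_{p^l}$ and being careful about two new features: the ring is not a field, so ``lines'' and ``hyperplanes'' come in several sizes, and the relevant orthogonality comes from additive characters on ${\Bbb Z}_{p^l}$ rather than on ${\Bbb Z}_p$. First I would fix a level $l$ vector $s$ (so $\nu_p(s)=0$, i.e.\ $s$ has a unit coordinate), and observe that the hyperplane $H_s=\{x: x\cdot s=0\}$ and its affine translates $H_{s,t}=\{x: x\cdot s=t\}$ for $t\in{\Bbb Z}_{p^l}$ partition ${\Bbb Z}_{p^l}^d$ into $p^l$ parallel pieces of equal size $p^{l(d-1)}$ (using $\nu_p(s)=0$ in the formula $|H_v|=p^{l(d-1)+\nu_p(v)}$). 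The map $x\mapsto x\cdot s$ is a surjective group homomorphism ${\Bbb Z}_{p^l}^d\to{\Bbb Z}_{p^l}$, which is what makes the fibers uniform.

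Next I would compute the Fourier transform of $1_{H_{s,t}}$, exactly paralleling the field computation: $\widehat{1_{H_{s,0}}}(m)=p^{-ld}\sum_{x\cdot s=0}\chi(-x\cdot m)$, and by the usual orthogonality of characters on ${\Bbb Z}_{p^l}^d$ this is supported precisely on the annihilator of $H_s$, which is the line $l_s=\{as: a\in{\Bbb Z}_{p^l}\}$ through $s$; on that line $\widehat{1_{H_{s,t}}}(as)=p^{-l}\chi(-at)$. So each $1_{H_{s,t}}$ has Fourier transform supported on the affine line $l_s$ (here through the origin), and by linearity so does any linear combination $\sum_i c_i 1_{H_i}$ of parallel level-$l$ hyperplanes. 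For the converse, suppose $\widehat f$ is supported on a level $l$ affine line. I would first reduce to the case the line passes through the origin (a translation of the support of $\widehat f$ corresponds to multiplying $f$ by a phase, or one absorbs the translation into the $H_i$); then, since $\{\,a\mapsto \chi(-at): t\in{\Bbb Z}_{p^l}\}$ is a basis for functions on ${\Bbb Z}_{p^l}$, write $\widehat f(as)=\sum_{t\in{\Bbb Z}_{p^l}} c_t\,p^{-l}\chi(-at)$ for unique $c_t$, match this against the transforms of the $1_{H_{s,t}}$, and apply Fourier inversion to conclude $f=\sum_t c_t 1_{H_{s,t}}$, a wavelet. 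An analogue of the Wavelet Lemma~\ref{lem: wavelet} — partitioning ${\Bbb Z}_{p^l}^d$ into the $p^l$ hyperplanes $H_{s,t}$ and reading off $\widehat f$ restricted to $l_s$ from the masses $m_{s,t}(f)=\sum_{x\in H_{s,t}}f(x)$ — makes the reconstruction explicit.

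The main obstacle, and the thing to be careful about, is the ring-theoretic bookkeeping of ``levels.'' In ${\Bbb Z}_{p^l}$ the annihilator of a subgroup is more delicate than an orthogonal complement over a field: for a \emph{level $l$} vector $s$ (unit coordinate) everything behaves just like the field case, but for degenerate $s$ (with $\nu_p(s)=k>0$) the line $l_s$ has only $p^{l-k}$ points and the hyperplane $H_s$ is correspondingly larger, so the $p^l$ ``translates'' $H_{s,t}$ are no longer distinct and one must instead index the $p^{l-k}$ distinct affine hyperplanes by a quotient. The cleanest route is to state and prove the level $l$ case exactly as above (where $s$ is a unit vector and all the field-case arithmetic goes through verbatim), and then note that the other levels are handled by the same argument applied in the appropriate quotient module; the excerpt explicitly only asks for the level $l$ statement and says the lower levels are ``defined analogously,'' so I would not belabor them. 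The only genuinely new check beyond transcribing the field proof is verifying the character orthogonality identity $\sum_{x\cdot s=0}\chi(-x\cdot m)$ equals $p^{l(d-1)}$ for $m\in l_s$ and $0$ otherwise, which follows from the surjectivity of $x\mapsto x\cdot s$ and orthogonality of characters on the cyclic group ${\Bbb Z}_{p^l}$.
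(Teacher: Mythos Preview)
Your proposal is correct and matches the paper's approach: the paper does not actually give a proof of this theorem, stating only that ``One can establish the following equivalence using the arguments similar to those used in the field case,'' and your write-up is precisely a careful execution of those field-case arguments (the computation of $\widehat{1_{H_{s,t}}}$, the character-basis argument for the converse, and the Wavelet Lemma analogue) adapted to ${\Bbb Z}_{p^l}$.

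One small remark: your handling of the word ``affine'' in the definition is slightly loose. A linear combination $\sum_t c_t 1_{H_{s,t}}$ always has Fourier support contained in the line $l_s$ \emph{through the origin}; if the definition genuinely allows $\widehat f$ to be supported on a translate $v+l_s$ with $v\notin l_s$, then the corresponding $f$ is a phase $\chi(x\cdot v)$ times a hyperplane combination, not a plain hyperplane combination, and ``absorbing the translation into the $H_i$'' does not work since translating a family of parallel hyperplanes returns the same family. This is more a looseness in the paper's statement than a flaw in your argument, and restricting to lines through the origin (as in the field case you are mirroring) makes everything go through exactly as you describe.
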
 

We also have the following decomposition theorem. Using the principle of inclusion-exclusion and the arguments used in the field case, we have the following decomposition of functions into wavelets in this context. 

\begin{theorem} Let $f: {\Bbb Z}_{p^l}^d \to {\Bbb C}$. Then there exists a finite family of wavelets $f_i$ such that $f=\sum_i f_i$. \end{theorem}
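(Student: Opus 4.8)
The plan is to adapt the proof of the field-case Wavelet Decomposition Theorem (Theorem~\ref{thm: planes}): there $\widehat f$ was written as the sum of its restrictions to the lines through the origin plus a single correction $c\,\delta_{\vec 0}$ at the origin, and then inverted. In the present setting the correction has to be replaced by a full inclusion--exclusion, because in the module ${\Bbb Z}_{p^l}^d$ two distinct lines through the origin can meet in more than just $\vec 0$.

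First I would record that the lines through the origin cover ${\Bbb Z}_{p^l}^d$, since every $x$ lies on $l_x$; let $L_1,\dots,L_N$ be the finitely many nonzero cyclic submodules of ${\Bbb Z}_{p^l}^d$, so their union is all of ${\Bbb Z}_{p^l}^d$. Inclusion--exclusion on indicator functions then gives
$$
\mathfrak{1}\;=\;\sum_{\emptyset\neq S\subseteq\{1,\dots,N\}}(-1)^{|S|+1}\,1_{N_S},\qquad\text{where }N_S:=\bigcap_{i\in S}L_i .
$$
The crucial step is to observe that each $N_S$ is itself a line through the origin, or is $\{0\}$: being contained in $L_{i_0}$ for any $i_0\in S$, it is a submodule of a cyclic ${\Bbb Z}_{p^l}$-module, hence cyclic (submodules of cyclic modules over the local principal ideal ring ${\Bbb Z}_{p^l}$ are cyclic), and in any case it lies inside some line through the origin. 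I would then multiply the displayed identity by $\widehat f$, let $g_S$ be the function with $\widehat{g_S}=\widehat f\cdot 1_{N_S}$, and take inverse Fourier transforms to get the finite decomposition $f=\sum_{\emptyset\neq S}(-1)^{|S|+1}g_S$. Since each $\widehat{g_S}$ is supported inside a line through the origin, i.e.\ inside an affine line at some level, each $g_S$ is a wavelet at that level by definition, which proves the theorem. (The case $d=1$ degenerates: there ${\Bbb Z}_{p^l}$ is itself a single level-$l$ line, so $\widehat f$ is automatically line-supported and $f$ is one wavelet.)

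The main obstacle --- really the only point that is not bookkeeping --- is the structural observation that an intersection of lines through the origin is automatically contained in one of those lines, hence cyclic, hence again a line; this is what makes the inclusion--exclusion close up after a single pass, with no recursion on higher-rank submodules. It is exactly here that the module case departs from the field case of Theorem~\ref{thm: planes}: over a field, distinct lines through the origin meet only in $\{0\}$ and the correction collapses to a multiple of $\delta_{\vec 0}$, whereas over ${\Bbb Z}_{p^l}$ the intersections can be genuine sub-lines --- which is harmless, since a sub-line is still a line and its associated inverse-Fourier-transform term is still a wavelet (at a possibly lower level). The remaining ingredients, that the lines cover the space and that the inverse Fourier transform is linear and bijective, are routine, so ``the arguments used in the field case'' go through essentially unchanged.
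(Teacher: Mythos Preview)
Your proposal is correct and follows essentially the same approach the paper indicates: the paper does not give a proof but only the sentence ``Using the principle of inclusion-exclusion and the arguments used in the field case, we have the following decomposition,'' and that is exactly what you carry out. Your explicit identification of the one nontrivial point --- that an intersection $\bigcap_{i\in S}L_i$ of cyclic ${\Bbb Z}_{p^l}$-submodules is again cyclic (being a submodule of any one of the $L_i\cong{\Bbb Z}_{p^{l-k}}$) and hence again a line at some level --- is precisely the content hidden in the paper's phrase ``inclusion-exclusion,'' and it is what prevents the argument from needing any recursion on higher-rank pieces.
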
 

Once again the number of wavelets depends on the support of $\widehat{f}$, or more precisely, the minimal number of lines of level $l$ needed to cover the support of $\widehat{f}$. This naturally leads one to define the level $l$ bandwidth of $f$ in this way. We shall develop this theory further in the sequel. 

\newpage

\vskip.125in 


\begin{thebibliography}{10} 

\bibitem{REUPAPER} C. Aten, et al, {\it Tiling sets and spectral sets over finite fields}, submitted for publication. (arXiv:1509.01090v1)

\bibitem{D88} I. Daubechies, {\it Orthonormal bases of compactly supported wavelets}, Comm. Pure Appl. Math. vol. 41 (1988), no. 7, 909-996.




\bibitem{GM84} A. Grossmann and J. Morlet, {\it Decomposition of Hardy functions into square integrable wavelets of constant shape}, SIAM J. Math. Anal. 15 (1984), no. 4, 723-736.

\bibitem{HIPRV15} D. Haessig, A. Iosevich, J. Pakianathan, S. Robins and L. Vaicunas, {\it Multitiling and packing in vector spaces over finite fields and zeroes of exponential sums}, (submitted for publication). 
%
%
%
%


%


%
%
%
%
%
%

\bibitem{M89} S. Mallat, {\it Multiresolution approximations and wavelet orthonormal bases of $L^2({\Bbb R})$}, Trans. Amer. Math. Soc. 315 (1989), no. 1, 
69-87.
%

\bibitem{N28} H. Nyquist, {\it Certain topics in telegraph transmission theory}, Trans. AIEE, vol. 47, pp. 617?1?644, Apr. (1928) Reprint as classic paper in: Proc. IEEE, Vol. 90, No. 2, Feb (2002). 

%
%
%
%

\bibitem{S49} C. E. Shannon, {\it Communication in the presence of noise}, Proc. Institute of Radio Engineers, vol. 37, no.1, pp. 10?1?21, Jan. (1949). Reprint as classic paper in: Proc. IEEE, Vol. 86, No. 2, (Feb 1998). 


\bibitem{U00} M. Unser, {\it Sampling-50 Years after Shannon}, Proc. IEEE, vol. 88, no. 4, pp. 569?1?587, (2000). 


\end{thebibliography}
\end{document}